\newtheorem{theorem}{Theorem}[section]
\newtheorem{proposition}{Proposition}[section]
\newtheorem{lemma}{Lemma}[section]
\newtheorem{remark}{Remark}[section]
\newtheorem{corollary}{Corollary}[section]
\newtheorem{definition}{Definition}[section]
\newtheorem{aplemma}{Lemma}
\newenvironment{proofth}[1]{\paragraph{\textit{#1}}}{\hfill$\square$}
\newcommand{\eps}{\varepsilon}
\newcommand{\C}{\mathcal{C}}
\newcommand{\A}{\mathcal{A}}
\newcommand{\D}{\mathcal{D}}
\newcommand\CC{\hbox{C\kern -.58em {\raise .54ex \hbox
			{$\scriptscriptstyle |$}}
		\kern-.55em {\raise .53ex \hbox{$\scriptscriptstyle |$}} }}
\newcommand\qd{\hfill$\sqcap\kern-8.0pt\hbox{$\sqcup$}$}
\newcommand\NN{\hbox{I\kern-.2em\hbox{N}}}
\newcommand\nn{\hbox{I\kern-.2em\hbox{N}}}
\newcommand\RR{I\!\!R}
\newcommand\sRR{{\sl \hbox{I\kern-.2em\hbox{R}}}}
\newcommand\QQ{\hbox{I\kern-.53em\hbox{Q}}}
\newcommand\sign{\hbox{Sign}}
\newcommand\signp{{\hbox{Sign}^+}}
\newcommand\heps{{\mathcal H}_\varepsilon}
\newcommand\R{\mathbb R}
\newcommand\N{\mathbb N}
\numberwithin{equation}{section}
\title[A granular model for crowd motion and pedestrian flow]{A granular model for crowd motion\\ and pedestrian flow}
\author[N. Igbida]{Noureddine Igbida}
\address{Institut de recherche XLIM, UMR-CNRS 7252, Facult\'e des Sciences et Techniques, Universit\'e de Limoges, 87100 Limoges,  France.}{} 
\email{noureddine.igbida@unilim.fr} 
\author[J.M.~Urbano]{Jos\'{e} Miguel Urbano}
\address{Applied Mathematics and Computational Sciences (AMCS), Computer, Electrical and Mathematical Sciences and Engineering Division (CEMSE), King Abdullah University of Science and Technology (KAUST), Thuwal, 23955 -6900, Kingdom of Saudi Arabia and CMUC, Department of Mathematics, University of Coimbra, 3000-143 Coimbra, Portugal}{} 
\email{miguel.urbano@kaust.edu.sa}
\begin{document}
	
	\subjclass[2020]{Primary 76A30. Secondary 35K65, 35B40}
	
	
	
	
	\keywords{Crowd motion; pedestrian flow; degenerate PDEs; asymptotic limit}
	
	\begin{abstract}
		We study a granular model for congested crowd motion and pedestrian flow. Our approach is based on an approximation through a Hele-Shaw type equation involving a degenerate operator of $p-$Laplacian type and a linear drift, for which we prove existence and uniqueness using nonlinear semigroup methods and the doubling variables technique. Our main result shows that, as $p \to \infty$, the weak solutions of the $p-$problem converge to a solution of the congested crowd motion problem interpreted in a variational sense.  
	\end{abstract}
	
	\date{\today}
	
	\maketitle
	
	\section{Introduction}
	
	Macroscopic models for pedestrian flow, in which the crowd is modeled as a moving fluid, were first introduced in \cite{Bord} and later explored in \cite{Helbing1, Helbing2}. The space-time dynamics of the crowd is governed by a flow velocity vector field $V$ according to the transport equation 
	\begin{equation}\label{transport1}
		\partial_t u +\nabla \cdot (u\: V) =f.
	\end{equation}
	Here, $ u=u(t,x) $ is the density of individuals at time $t\geq 0$ and position $x\in \R^2$, which needs to accurately describe an admissible global distribution of the population, and $f$ is a given source. 
	
	The vector field $V$ takes into account the overall behaviour of the crowd (for example, the goal of reaching an exit or the avoidance of some danger) but neglects the local behaviour of pedestrians (who may, for example, be in a hurry, adapt their speed or try to avoid the crowd). To deal with local effects and following the predicting-correcting algorithms introduced in \cite{MRS1}, we consider a new vector field $W$ that will, in particular, consider congestion effects, thus obtaining the master equation
	\begin{equation}\label{transport2}
		\partial_t u +\nabla \cdot \left( W + u\: V \right) =f.
	\end{equation}
	
	The vector field $W=W(\nabla v)$ will be driven by the gradient of a potential $v \geq 0$ such that
	\begin{equation} \label{CC}
		v(u-1) =0,
	\end{equation}
	which is then a kind of Lagrange multiplier associated with the two-sided constraint $0\leq u \leq 1$. We can express this by requiring
	$$u \in \signp (v)\hbox{ and }v\geq 0,
	$$
	where $\signp$ denotes the maximal monotone graph given by
	$$
	\signp(r) := \left\{ 
	\begin{array}{cll}
		1 & \mathrm{if} & r>0\\  
		\displaystyle [0,1] & \mathrm{if} & r=0\\
		0 & \mathrm{if} & r<0. 
	\end{array}
	\right.
	$$ 
	
The linear case, corresponding to the choice
	\begin{equation} \label{optionmaury}
		W(\nabla v) = -\nabla v,
	\end{equation} 
	leads to the equation 
	$$
	\partial_t u - \Delta v +\nabla \cdot (u\: V) =f,
	$$
	which is relatively well-understood (see \cite{Ig2023}). Here, we explore the nonlinear case
	\begin{equation}\label{plap} 
		W(\nabla v) = -\left| \nabla v \right|^{p-2}\nabla v, \qquad 2<p<\infty,
	\end{equation}
	leading to the degenerate PDE 
	\begin{equation} 
		\partial_t u - \Delta_p v +\nabla \cdot (u\: V) =f, 
	\end{equation}
	and study the asymptotic limit problem obtained by taking $p \to \infty$.
	
	In the linear case $p=2$, congestion is modelled through linear diffusion and Brownian motion, and the crowd behaves like a Newtonian fluid. For $p>2$, the crowd behaves like a non-Newtonian shear-thickening or dilatant fluid, with the viscosity depending on the shear stress. 
	
	As we let $p$ approach infinity, we aim to capture a granular type of behaviour exhibited by the crowd, mirroring the well-established behaviour of sandpiles. Formally, the limiting problem aims to patch the transport equation with  
	\begin{equation}\label{plapinfty}
		W=-m\: \nabla v,\quad  \left|\nabla v\right|\leq 1,\quad m(\left|\nabla v\right|-1)=0,
	\end{equation}
	where $v$ is an unknown potential connected to the distance to the exit, supported in the congested region $[u =1]$, \textit{i.e.}, satisfying \eqref{CC}. The parameter $m$ is a Lagrange multiplier associated with the constraint $\left|\nabla v\right|\leq 1,$ which could be connected to the random movements of the individuals in the congested region (see \cite{EvRez} and \cite{IgStoch}). For a geometrical interpretation of $m$ in terms of the boundary curvature and the normal distance to the cut locus of the domain $\Omega$, see also \cite{Card1, Card2, CFV}. Moreover, rather than addressing the intricacies of the tangential gradient required due to the Radon measure $m$, as seen in \cite{BBS} for certain closely related particular cases, we adopt an equivalent formulation based on the variational characterization of the solution. To keep the presentation focused, we do not pursue a weak formulation (in the distributional sense) based on \eqref{plapinfty} in this work; this will likely be the subject of future investigations.

    Connecting the dynamics of a pedestrian moving towards a fixed target to that of sandpile particles moving towards the exit of a table is a plausible scenario introduced and studied numerically in \cite{EIJ}. In this model, the pile's height is linked to a \textit{potential} value so that higher potential areas have more particles (think of crowded zones).
	The self-organization of particles in a sandpile is a captivating natural phenomenon that has directly or indirectly inspired numerous physical models (see, for instance, \cite{BaPri1, BaPri2, DIg1, DIg2, Pr2, Pr}). Unlike the growth of a sandpile, where a source and gravity govern the dynamics, the movement in crowd motion is determined by the instantaneous movements of particles driven by the spontaneous velocity field $V$. Additionally, the approach could be formally grounded at the microscopic level by employing the stochastic sandpile model introduced by Evans and Rezakhanlou (see \cite{EvRez} and \cite{IgStoch}).  
	
	Imagine a grid of cubes (see Figure \ref{Figcubes}) representing pedestrians trying to reach an exit. Like a person, each cube can only move downhill (to a lower cube) randomly until it gets stuck. This creates a flow of \textit{pedestrian-cubes} similar to sand in a sandpile. Using an appropriate scaling of time and space, one would guess the resulting continuous dynamics follows a sandpile macroscopic flow to remedy the congestion. People (cubes) move downhill (following the gradient) but only when it is \textit{favourable} (think of a passage leading to the exit around a congested zone with a staircase offering sequential available positions). This movement is described by a \textit{flow} equation, where the flow is controlled by the potential's gradient. Indeed, in their pioneering work \cite{EvRez} (see also \cite{IgStoch}), Evans and Rezakhanlou study the case of a sandpile when the congestion constraint and the transport term are absent, \textit{i.e.}, for $v=u$ and $V=0$. They prove that the rescaled pile's height converges to the solution of a nonlinear sandpile dynamics governed by a flux $\Phi$ derived from a potential $z$, as expressed by 
	$$\Phi=-m\: \nabla z,$$
	where $z$, linked to the sandpile's height, satisfies the gradient constraint 
	$$
	\left|\nabla z\right|\leq 1,$$
	closely mirroring the discrete constraint on the cubes at the microscopic level. Additionally, $m\geq 0$ is an unknown parameter subject to the condition $m(\left|\nabla z\right|-1)=0$, which reflects the fact that particle movement towards the exit occurs only under favourable circumstances delineated by the gradient of $z.$ 	Hence, one can formally map the random cube movement to the behaviour of pedestrians in the congested regime, as depicted in the formal illustration of Figure \ref{Figcubes}.

	\begin{figure}[!t]
		\centering
		\includegraphics[width=11cm,height=5cm]{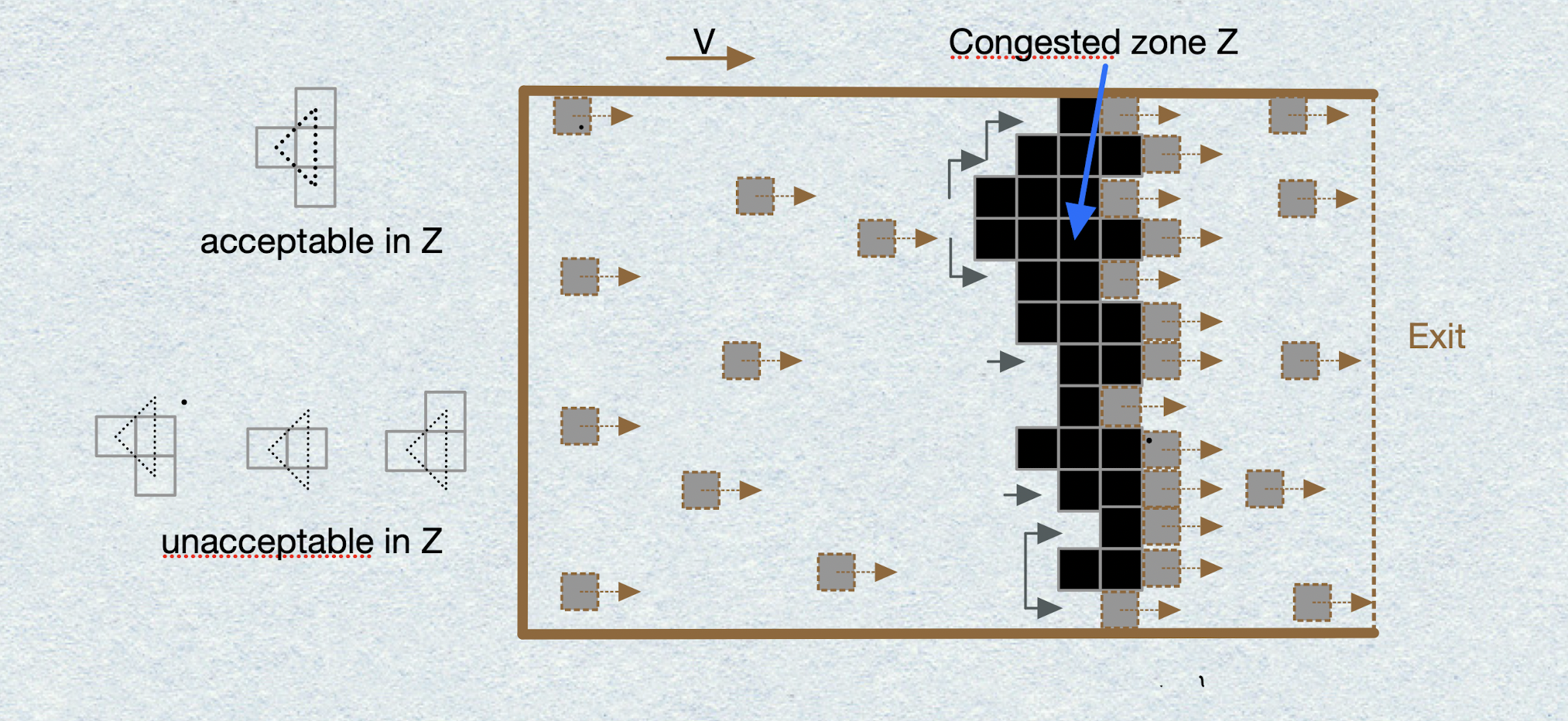}
		\caption{Toy pedestrian-cubes model}
		\label{Figcubes}
	\end{figure}

    The well-established micro-macro limit for sandpiles — rigorously derived by transitioning from a discrete spatio-temporal stochastic model at the cube scale to a continuous one — strongly motivates the use of the $p-$Laplacian to describe sand-grain-like granular dynamics. However, extending this approach to population dynamics remains challenging. The presence of a two-sided constraint on the density and the first-order transport term $V$ obstructs the adaptation of the techniques from \cite{EvRez} and \cite{IgStoch}, primarily due to difficulties with spatial and temporal scaling. The problem remains open and deserves further investigation, particularly within this context, which is highly relevant for applications in pedestrian dynamics.
    
	We conclude that \eqref{plap} and \eqref{plapinfty} provide two variants of macroscopic crowd motion models with hard congestion, aligning with the class of models introduced by Maury and collaborators (\textit{cf.} \cite{MRS1, MRS2, MRSV}). Unlike the linear scenario \eqref{optionmaury}, which represents the homogeneous random movement of pedestrians around the congested zone, these variants enable the natural handling (at the macroscopic level) of pedestrian movement, allowing them to occupy empty adjacent sites in the congested area towards the exit when possible, or to come to a halt if necessary. 
	
	The plan of the paper is the following: in Section \ref{aamr}, we gather some notation and the assumptions and state the main results; in Section \ref{Scontraction}, following \cite{Ig2023}, we establish the uniqueness of nonnegative weak solutions for the $p-$problem using the ideas of DiPerna-Lions on renormalization, and Kruzhkov's doubling and de-doubling techniques; in Section \ref{Sexistencep}, we prove the existence of a weak solution by employing nonlinear semigroup methods, building upon the $L^1-$contraction results from the previous section; finally, Section \ref{Slimitp} provides the proof of the convergence of solutions of the $p-$problem to the congested crowd motion problem, as the parameter $p$ approaches infinity; the appendix contains the proofs of some technical results used in Section \ref{Scontraction}. 
	
	\section{Assumptions and main results}\label{aamr}
	
	We assume that $\Omega \subset \R^N$ is a bounded open set, with a regular boundary, split into $\partial \Omega=\Gamma_D\cup \Gamma_N,$ such that $\Gamma_D\cap \Gamma_N=\emptyset$ and 
	$$
	\mathcal L^{N-1}(\Gamma_D)> 0.
	$$
	For $T>0$, we denote
	$$
	Q:=(0,T)\times \Omega; \qquad \Sigma_D:= (0,T)\times \Gamma_D; \qquad \Sigma_N:= (0,T)\times \Gamma_N.
	$$
	Given a source $f$, a velocity vector field $V$ and an initial datum $u_0$, we consider the problem of finding a pair of nonnegative functions $(u,v)$ such that
	\begin{equation}
		\label{cmef}
		\left\{  
		\begin{array}{lcl}
			\left.
			\begin{array}{l}
				\displaystyle \partial_t u - \nabla \cdot \left( \left|\nabla v\right|^{p-2}\nabla v - u  \: V \right) = f\\
				\\
				u\in \sign^+ (v)  
			\end{array}
			\right\}
			\   & \mathrm{in} & Q \\  
			\\
			\ \ v= 0  & \mathrm{on} & \Sigma_D\\ 
			\\
			\ \ \left( \left|\nabla v\right|^{p-2}\nabla v - u  \: V \right) \cdot \nu = 0  & \mathrm{on} & \Sigma_N\\  
			\\
			\ \ u (0)=u _0 & \mathrm{in} & \Omega,
		\end{array} 
		\right.
	\end{equation}
	
	\medskip
	
	\noindent where $p>2$ and $\nu$ is the outward unitary normal to $\partial \Omega$. 
	
\smallskip
	
Throughout the  paper, we assume
\begin{equation} \label{HypV}
		V \in  \left[ W^{1,p'}(\Omega) \right]^N, \qquad \nabla \cdot V \in L^\infty(\Omega),
\end{equation}
and  
\begin{equation} \label{HypV0}
V \cdot \nu \geq 0 \quad \mathrm{on}\ \Gamma_D  \qquad \mathrm{and} \qquad V \cdot \nu = 0 \quad \mathrm{on}\ \Gamma_N ,
\end{equation}
in the sense that 
\begin{equation}\label{HypVstg}
		\liminf_{h\to 0} \frac{1}{h} \int_{\{ x\in \Omega\: :\: d(x,\partial \Omega)< h\}} \xi \, V(x) \cdot \nu(\pi(x)) \, dx \geq   0,  
\end{equation}
for all  $0\leq \xi \in L^p(\Omega)$. Here, $d(.,\partial \Omega)$ is the Euclidean distance to the boundary of $\Omega$, and $\pi(x)$ denotes the projection of $x$ onto the boundary $\partial \Omega$. 

\medskip
Our first result concerns the existence and uniqueness of a weak solution to \eqref{cmef} in the sense we next precise. We denote 
$$
W^{1,p}_D(\Omega) : =\left\{ w\in W^{1,p}(\Omega) \, : \, w=0 \ \mathrm{on}\ \Gamma_D \right\}. 
$$

\begin{definition} \label{weaksol}
A couple $(u ,v)$ is a weak solution of \eqref{cmef} if 
$$(u ,v)\in  C \left( [0,T),L^1(\Omega) \right) \cap L^\infty(Q) \times  L^p \left(0,T;W^{1,p}_D(\Omega)\right),$$
$$ v\geq 0,\qquad u \in \signp(v),\mbox{ a.e. in } Q,$$ 
and
\begin{eqnarray*}
- \iint_Q u\: \xi \: \psi^\prime + \iint_Q \left( \left|\nabla v\right|^{p-2} \nabla v - u \:V \right) \cdot \nabla\xi \: \psi\\
=\iint_Q f \: \xi \: \psi + \int_\Omega u_0 \:\xi \: \psi(0), 
\end{eqnarray*}
for any $\xi \in W^{1,p}_D(\Omega)$ and $\psi\in \D \left( [0,T) \right)$. 	  
\end{definition}

\begin{theorem}\label{Theorem1}
For any $0\leq f\in L^{p'}(Q)$ and $u_0\in L^\infty(\Omega)$ such that 
$$0\leq u_0 \leq 1, \quad \mbox{a.e. in } \Omega,$$
the problem \eqref{cmef} has a unique weak solution in the sense of Definition \ref{weaksol}.
\end{theorem}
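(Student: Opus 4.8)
This is two statements in one, and I would treat existence and uniqueness separately, following the division into Sections~\ref{Scontraction} and~\ref{Sexistencep} announced above.

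\emph{Uniqueness.} I would prove the stronger $L^1$-comparison principle, whence uniqueness follows at once. For two weak solutions $(u_1,v_1),(u_2,v_2)$ with data $(f_1,u_{0,1}),(f_2,u_{0,2})$, the aim is an estimate
$$\norm{(u_1-u_2)^+(t)}_{L^1(\Omega)} \le \norm{(u_{0,1}-u_{0,2})^+}_{L^1(\Omega)} + \int_0^t \norm{(f_1-f_2)^+}_{L^1(\Omega)}\,ds,$$
possibly up to a Gronwall factor governed by $\norm{\nabla\cdot V}_{L^\infty(\Omega)}$. The structural input is the monotone compatibility inherited from the constraint: since $u_i\in\signp(v_i)$ and $v_i\ge 0$, one has $(u_1-u_2)(v_1-v_2)\ge 0$ a.e. in $Q$ (a strict inequality $v_1<v_2$ would force $u_2=1\ge u_1$). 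Combining this with the monotonicity of the flux $r\mapsto\abs{r}^{p-2}r$, the degenerate diffusion produces a favourable (nonnegative) contribution. The genuine difficulty is the first-order transport flux $\nabla\cdot(u\,V)$: since $u$ is only $L^\infty$, the chain rule is unavailable, so I would resort to DiPerna--Lions renormalization --- regularizing $V$, using $\nabla\cdot V\in L^\infty$ and the boundary signs \eqref{HypV0}--\eqref{HypVstg} to control the flux across $\Gamma_D$, and passing to the limit. Because the solutions are merely $C([0,T);L^1(\Omega))$ in time, I would double the time variable (and, for the transport part, the space variable à la Kruzhkov) and de-double after the limit passage, reproducing the desired contraction.

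\emph{Existence.} I would build the solution by nonlinear semigroup methods, regarding \eqref{cmef} as an abstract Cauchy problem $u'+A u\ni f$ in $L^1(\Omega)$, with $A$ the operator sending $u$ to $-\nabla\cdot(\abs{\nabla v}^{p-2}\nabla v-u\,V)$ subject to $u\in\signp(v)$, $v\in W^{1,p}_D(\Omega)$. The plan has four steps. First, solve the resolvent problem: for $g\in L^1(\Omega)$ and $\lambda>0$,
$$u-\lambda\,\nabla\cdot\!\left(\abs{\nabla v}^{p-2}\nabla v-u\,V\right)=g,\qquad u\in\signp(v),$$
by a variational/monotone-operator argument in $W^{1,p}_D(\Omega)$, coercivity and strict monotonicity coming from the $p$-Laplacian and the graph $\signp$ being treated by Yosida approximation or penalization. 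Second, show that $A$ (or $A+\omega I$ with $\omega\sim\norm{\nabla\cdot V}_{L^\infty(\Omega)}$) is $m$-accretive in $L^1(\Omega)$: accretivity is the stationary analogue of the contraction from the first part, and the range condition $R(I+\lambda A)=L^1(\Omega)$ is exactly the solvability of the resolvent problem. Third, invoke the Crandall--Liggett theorem to obtain a mild solution as the limit of the implicit Euler scheme. Fourth, identify this mild solution with a weak solution in the sense of Definition~\ref{weaksol}: using the uniform bounds $0\le u\le 1$ and $v\in L^p(0,T;W^{1,p}_D(\Omega))$, I would pass to the limit in the time-discretized weak formulation, the constraint $u\in\signp(v)$ persisting by maximal monotonicity.

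\emph{Main obstacle.} In both halves the crux is the first-order drift coupled to the weak regularity of $u$. For uniqueness this is the renormalization step, where one must verify that the commutator and boundary terms created by regularizing $V$ disappear and that, after doubling, the mixed diffusion--transport contributions carry the right sign --- the bookkeeping being made delicate by the two-sided constraint. For existence the critical point is the identification of the abstract mild solution as a genuine weak solution, that is, upgrading the $L^1$-convergence of the Euler iterates to strong convergence of the gradients so as to pass to the limit in the nonlinear flux $\abs{\nabla v}^{p-2}\nabla v$ (via a Minty/monotonicity argument), with additional care required near the free boundary $\partial\{v>0\}$ separating the congested region $\{u=1\}$ from $\{u<1\}$.
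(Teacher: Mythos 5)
Your proposal follows essentially the same route as the paper: uniqueness via an $L^1$-contraction obtained from DiPerna--Lions renormalization, a Kato-type inequality proved by Kruzhkov doubling/de-doubling, and the boundary hypotheses \eqref{HypV0}--\eqref{HypVstg} to kill the boundary flux; existence via the implicit Euler scheme, solvability of the resolvent problem by penalizing the graph $\signp$ and monotone-operator theory, accretivity in $L^1(\Omega)$ inherited from the contraction, Crandall--Liggett, and identification of the mild solution as a weak solution by a monotonicity (Minty) passage to the limit. Both halves, including the points you flag as delicate (the commutator estimate for the drift and the limit in the nonlinear flux $\left|\nabla v\right|^{p-2}\nabla v$), coincide with Sections \ref{Scontraction} and \ref{Sexistencep} of the paper.
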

	
	\medskip 
	
	We next analyze the behaviour of the evolution problem \eqref{cmef} as the exponent $p$ approaches infinity. Since the pioneering work \cite{AEW} (see also \cite{Pr}), this limit represents a fundamental shift in the dynamics, revealing a critical connection between the long-term behaviour of the original $p-$Laplacian equation and the dynamics of grains in sandpile models (see also \cite{DIg1, DIg2, EFG, Pr2}). Indeed, letting $p\to\infty$ in the original equation 
	$$
	\frac{\partial z }{\partial t} - \nabla \cdot \left( \left|\nabla z\right|^{p-2}\nabla z  \right) = f \quad \text{in } Q,
	$$
	we obtain the limiting problem given by 
	\begin{equation} \label{limitp}
		\frac{\partial z }{\partial t} - \nabla \cdot \left( m\: \nabla z  \right) = f, \qquad \vert \nabla z\vert \leq 1\quad \text{in } Q, 
	\end{equation}
	where $m\geq 0$ is an unknown parameter that depends on the solution itself through the condition  
	$$
	m(\vert \nabla z\vert -1)=0 \quad \text{in } Q.
	$$
	So, formally, the limiting problem of \eqref{cmef}, as $p\to\infty$, may be given by 
	
	\begin{equation}
		\label{cmefinfty}
		\left\{  
		\begin{array}{lcl}
			\left.
			\begin{array}{l}
				\displaystyle \partial_t u  - \nabla \cdot \left( m\: \nabla v  - u  \: V \right) = f\\
				\\
				u\in \sign^+ (v), \quad	\vert \nabla v\vert \leq 1\\
				\\
				m\geq 0, \quad  m(\vert \nabla v\vert-1)=0 	  
			\end{array}
			\right\}
			\   & \mathrm{in} & Q \\  
			\\
			\ \ v= 0  & \mathrm{on} & \Sigma_D\\ 
			\\
			\ \ \left( m\: \nabla v - u  \: V \right) \cdot \nu = 0  & \mathrm{on} & \Sigma_N\\  
			\\
			\ \ u (0)=u _0 & \mathrm{in} & \Omega.
		\end{array} 
		\right.
	\end{equation} 
	
	\medskip
	
\noindent However, while $m$ is typically a Radon measure in similar settings (see, for instance, \cite{BBS} for the case of \eqref{limitp}), the gradient of $v$ requires a specialized approach called the \textit{tangential gradient} (see \cite{BBS} for details). To avoid this complexity, which we plan to explore further in future works, we will instead leverage an equivalent formulation based on the variational description of the solution (see also \cite{IgEquiv, IgEvol, DJ} for details regarding this equivalence in the context of \eqref{limitp}). Our focus will be on the characterization of the limit of the solutions to \eqref{cmef} using this variational formulation. We formally justify our choice as follows: considering the subgradient constraint in the diffusion operator governing the PDE in \eqref{cmefinfty}, we propose an integral formulation incorporating $v-\xi$ as a test function, for $0\leq \xi \in W_D^{1,\infty} (\Omega)$ and $| \nabla \xi | \leq 1$. Following this setup, it is straightforward to observe that  
$$
    \int_{\Omega}  m\: \nabla v    \cdot \nabla (v-\xi) = \int_{\Omega}  m\: (1- \nabla v    \cdot  \nabla \xi)  \geq 0.
$$
Furthermore, since $u\in \signp(v)$, we formally derive that     
$$ \int_\Omega \partial_t u\: (v-\xi) = \int_{[u=1]} \partial_t u\: v - \int_\Omega \partial_t u\: \xi  =  - \frac{d}{dt}\int_\Omega u\: \xi.$$ 
This suggests the viability of reformulating the problem into the variational form
$$- \frac{d}{dt}\int_\Omega u\: \xi \leq \int_\Omega   u  \: V \cdot \nabla \left( v - \xi \right) + \int_\Omega  f  \left( v - \xi \right), \quad \hbox{ in }\D'([0,T)),$$
according to the following definition.

\begin{definition} \label{varsol}
A couple $(u ,v)$ is a variational solution of \eqref{cmefinfty} if 
$$(u,v) \in  L^\infty(Q) \times L^q \left(0,T;W^{1,q}_{D}(\Omega) \right), \mbox{ for any } 1\leq q<\infty,$$
$$ v\geq 0,\qquad u \in \signp(v),\quad \left| \nabla v \right| \leq 1, \quad \mbox{a.e. in } Q,$$ 
and
\begin{eqnarray*}
\iint_{Q} u \: \xi \: \psi'(t) + \int_{\Omega} u_0 \: \xi \: \psi(0) \leq  \iint_{Q} u  \: V \cdot \nabla \left( v - \xi \right) \psi + \iint_{Q} f  \left( v - \xi \right) \psi,  
\end{eqnarray*}
for any $0\leq \xi \in W_D^{1,\infty} (\Omega)$ such that $|\nabla \xi | \leq 1$, a.e. in $\Omega$, and $0\leq \psi\in \D \left( [0,T) \right)$.  
\end{definition}
 
\begin{theorem}\label{Theorem2}
For any $0\leq f\in L^{p'}(Q)$ and $u_0\in L^\infty(\Omega)$ such that 
$$0\leq u_0 \leq 1, \quad \mbox{a.e. in } \Omega,$$ 
let $(u_p,v_p)$ be the weak solution of \eqref{cmef}. For subsequences that we relabel for convenience, we have  
$$u_p \rightharpoonup u \quad \mathrm{in} \quad L^\infty (Q)-\mbox{weak-}\ast;$$
$$v_p \rightharpoonup v  \quad \mathrm{in} \quad L^q \left(0,T;W^{1,q}(\Omega)\right)-\mathrm{weak},$$
as $p\to\infty$, where $(u,v)$ is a variational solution of the problem \eqref{cmefinfty} in the sense of Definition \ref{varsol}.
\end{theorem}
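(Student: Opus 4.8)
The plan is to combine uniform-in-$p$ a priori bounds with a careful passage to the limit in the weak formulation of Definition \ref{weaksol}, closing the argument with a compactness step for the potential. First I would collect the estimates that survive $p\to\infty$. Since $u_p\in\signp(v_p)$ forces $0\le u_p\le1$, one has $\norm{u_p}_{L^\infty(Q)}\le1$. The crucial bound is the energy identity obtained by testing \eqref{cmef} with $v_p$: because $u_p\in\signp(v_p)$ and $0\le u_p\le1$, the parabolic term $\iint_Q\partial_t u_p\,v_p$ vanishes (chain rule for the convex indicator of $[0,1]$, whose value along $u_p$ is identically zero), leaving $\iint_Q|\nabla v_p|^p=\iint_Q u_p\,V\cdot\nabla v_p+\iint_Q f\,v_p$. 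Using $0\le u_p\le1$, \eqref{HypV}, Young's inequality and the Poincaré inequality on $W^{1,p}_D(\Omega)$ (whose constant stays bounded as $p\to\infty$, since $\Gamma_D$ carries positive measure), this yields $\norm{\nabla v_p}_{L^p(Q)}^{p-1}\le C$ with $C$ independent of $p$; here $\norm{V}_{L^{p'}}$ and $\norm{f}_{L^{p'}}$ remain bounded because $p'\to1$. Consequently $\norm{\nabla v_p}_{L^p(Q)}$ stays bounded, and by Hölder's inequality, for every fixed $q<\infty$ and $p>q$, $\norm{\nabla v_p}_{L^q(Q)}\le|Q|^{1/q-1/p}\,\norm{\nabla v_p}_{L^p(Q)}$ is bounded uniformly in $p$.

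I would then extract limits: $u_p\rightharpoonup u$ in $L^\infty(Q)$ weak-$\ast$ with $0\le u\le1$, and, by a diagonal argument over $q\in\N$, a single subsequence with $v_p\rightharpoonup v$ in $L^q(0,T;W^{1,q}(\Omega))$ for all $q<\infty$. Weak lower semicontinuity gives $\norm{\nabla v}_{L^q(Q)}\le|Q|^{1/q}$ for every $q$; letting $q\to\infty$ yields the gradient constraint $|\nabla v|\le1$ a.e., while $v_p\ge0$ passes to $v\ge0$. Next, testing \eqref{cmef} with $(v_p-\xi)\psi$ for admissible $\xi$ and $0\le\psi\in\D([0,T))$ — rigorously, combining the test against $\xi\psi$ with the energy identity for $v_p\psi$ — the parabolic term reduces, after integration by parts in time, to $\iint_Q u_p\,\xi\,\psi'+\int_\Omega u_0\,\xi\,\psi(0)$. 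For the diffusion term I would use $|\nabla v_p|^{p-2}\nabla v_p\cdot\nabla(v_p-\xi)\ge\frac1p(|\nabla v_p|^p-1)$, which follows from $|\nabla\xi|\le1$ and Young's inequality, so this term is bounded below by $-\frac1p\iint_Q\psi\to0$. The decisive observation is the pointwise identity $u_p\,\nabla v_p=\nabla v_p$ a.e. in $Q$ — valid because $u_p=1$ on $\{v_p>0\}$ while $\nabla v_p=0$ a.e. on $\{v_p=0\}$ — which rewrites the drift as $\iint_Q V\cdot\nabla v_p\,\psi$ and thus passes to the limit linearly against $V\psi$, circumventing the product of two merely weakly convergent factors. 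Passing to the limit I obtain the relaxed inequality $\iint_Q u\,\xi\,\psi'+\int_\Omega u_0\,\xi\,\psi(0)\le\iint_Q V\cdot\nabla v\,\psi-\iint_Q u\,V\cdot\nabla\xi\,\psi+\iint_Q f(v-\xi)\psi$.

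The main difficulty is to recover the constraint $u\in\signp(v)$, equivalently $(1-u)v=0$ a.e. Although $(1-u_p)v_p=0$ holds identically for each $p$, this is a product of weakly convergent sequences and does not pass to the limit for free; it requires strong compactness of the potential. I would obtain strong convergence $v_p\to v$ in $L^1(Q)$ by a time-translation estimate — testing the increment in time of \eqref{cmef} with the corresponding increment of $v_p$ and exploiting the monotonicity of the graph $\signp$ together with the $L^{p'}$-bound on the flux (an Aubin–Lions/Simon-type argument adapted to the degenerate structure) — and then invoke the standard closure lemma for maximal monotone graphs: since $v_p\to v$ strongly, $u_p\rightharpoonup u$ weakly, and $u_p\in\signp(v_p)$, one has $\limsup_p\iint_Q u_p v_p\le\iint_Q uv$, whence $u\in\signp(v)$ a.e. This is the delicate point, made subtle by the absence of any spatial compactness for $u_p$ (whose limit is genuinely discontinuous) and by the only $L^{p'}$-control on $|\nabla v_p|^{p-2}\nabla v_p$, which in the limit is merely a Radon measure.

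Once the constraint is established, $u\,\nabla v=\nabla v$ a.e., so $\iint_Q V\cdot\nabla v\,\psi=\iint_Q u\,V\cdot\nabla v\,\psi$, and the relaxed inequality above becomes exactly the inequality in Definition \ref{varsol}. Together with $0\le u\le1$, $v\ge0$, $|\nabla v|\le1$ and $u\in\signp(v)$, this shows that the weak-$\ast$ limit $u$ and the weak limit $v$ form a variational solution of \eqref{cmefinfty} in the sense of Definition \ref{varsol}, completing the proof.
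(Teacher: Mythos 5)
Your a priori estimates, the extraction of weak limits, the derivation of the gradient constraint $\left|\nabla v\right|\leq 1$, the key identity $u_p\nabla v_p=\nabla v_p$ for the drift term, and your Young's-inequality lower bound for the diffusion term (a clean alternative to the paper's Minty-type argument with the test functions $\delta\xi$, $\delta<1$, and the vanishing factor $\delta^{p-1}$) all match, or are equivalent to, the paper's proof. The genuine gap is in your recovery of the constraint $u\in\signp(v)$, which you yourself flag as the delicate point. You propose to prove strong convergence $v_p\to v$ in $L^1(Q)$ by an Aubin--Lions/Simon argument built on time-translation estimates. This step would fail: the equation controls time translates of $u_p$ only, and the graph $\signp$ is degenerate in both directions (it has a vertical branch over $u=1$), so neither the distributional bound on $\partial_t u_p$ nor the Alt--Luckhaus-type product estimate on $\iint (u_p(t+h)-u_p(t))(v_p(t+h)-v_p(t))$ transfers any time equicontinuity to $v_p$. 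Concretely, inside the congested region $\{u_p=1\}$ the increments of $u_p$ vanish, and $v_p$ may oscillate arbitrarily in time there without violating any estimate at your disposal; hence the hypotheses of Simon's compactness theorem cannot be verified for $v_p$, and strong $L^1(Q)$ compactness of the potentials is not obtainable along this route.

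What is actually needed is strictly weaker than strong convergence of $v_p$: since $(1-u_p)v_p=0$ identically, it suffices to show that the product passes to the limit, $\iint_Q u_p v_p\to\iint_Q uv$, i.e.\ that $\iint_Q(1-u)v=0$. This is a compensated-compactness statement --- time-derivative control on one factor, space control on the other --- and it is exactly what the paper invokes: it first checks that $\left|\iint_Q u_p\,\partial_t\varphi\right|\leq C\|\nabla\varphi\|_\infty$ uniformly in $p$ (using the $L^1$ bound on $\left|\nabla v_p\right|^{p-1}$ coming from the energy estimate \eqref{newestimate}), and then applies the nonlinear time-compactness result of Andreianov, Canc\`es and Moussa \cite[Proposition 1.4]{ACM_17}, which yields $u\in\signp(v)$ directly, with no strong convergence of either factor. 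Your argument becomes correct if you replace your strong-compactness step by this lemma (or an equivalent compensated-compactness result for pairings of sequences with bounded time derivatives against sequences bounded in $L^q(0,T;W^{1,q}(\Omega))$). A minor additional point: your claim that the parabolic term $\iint_Q \partial_t u_p\, v_p$ vanishes rests on a formal chain rule for a distributional time derivative paired with a function that is merely in $L^p(0,T;W^{1,p}_D(\Omega))$; the paper obtains the needed inequality \eqref{newestimate} rigorously at the level of the Euler implicit scheme, where it reduces to the elementary pointwise inequality $(u_{i+1}-u_i)v_{i+1}\geq 0$.
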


\begin{remark}
As previously noted, establishing the equivalence between the notion of solution involving the tangential gradient and the variational formulation we consider remains an open problem, one that we plan to explore in future work. Here, we remind the reader that this equivalence hinges on the subgradient operator governing the evolution being defined by $$v\to - \nabla \cdot  ( m\: \nabla v ),$$
where $m\geq 0$ and $\vert \nabla v\vert \leq 1$. It coincides with the subdifferential operator $v\to \partial I\!\!I_{K}$, where $K$ is given by 
$$K:=\Big\{z\in W^{1,p}_{D}(\Omega)\: :\:   \vert \nabla z\vert \leq 1\hbox{ a.e. in }\Omega \Big\}.$$ 
A function g belongs to $ \partial I\!\!I_{K} (v)$ if and only if 
$$\int_\Omega g\: (v-z)\geq 0,\quad \hbox{ for any }z\in K. $$  
However, working with the subgradient operator within \eqref{cmefinfty} requires additional tools and developments, which, while important, extend beyond the primary focus of this paper.
\end{remark}   
	 
\medskip	
	To close this section, we introduce some further notations to be used in the paper. Define, for each $h>0$, 
	\begin{equation}\label{xih}
		\xi_h(x) : =\frac{1}{h} \min \left\{ h, d(x,\partial \Omega) \right\} \qquad \mathrm{and} \qquad \nu_h(x)=-\nabla \xi_h (x), 
	\end{equation} 
	for $x\in \Omega$. The function $\xi_h \in H^1_0(\Omega)$ is regular (as smooth as the boundary) and concave, $0\leq \xi_h\leq 1$ and, for any $x\in \Omega$ such that $d(x,\partial \Omega)<h$,
	$$ 
	\nu_h(x) =  - \frac{1}{h} \nabla d(x,\partial \Omega).  
	$$ 
	In particular, for such $x$, we have  $h\nu_h(x) = \nu(\pi(x))$.  
	We denote 
	$$
	\nabla^{p-1} w := \left|\nabla  w \right|^{p-2} \nabla  w
	$$
	and let $\sign_0$ be the real discontinuous function defined in $\R$ by
	$$
	\sign_0(r) = \left\{ 
	\begin{array}{ccl}
		1 & \mathrm{if} & r>0 \\  
		0 & \mathrm{if} & r=0\\
		-1 & \mathrm{if} & r<0.
	\end{array}
	\right.
	$$   
	
	\section{$L^1-$contraction}\label{Scontraction}
	
	In this section, we focus first on the uniqueness and $L^1-$comparison principle for weak solutions. Following the approach developed in \cite{Ig2023}, we need a Kato's inequality, whose proof uses, in an essential way, the fact that weak solutions are also renormalized solutions \textit{à la} DiPerna-Lions. This is the object of the following result. 
	
	\begin{proposition}[Renormalised formulation] \label{prenormal}
		If $(u,v)$ is a weak solution of \eqref{cmef}, then
		$$	
		\partial_t \beta (u) - \Delta_p v  + V \cdot \nabla \beta(u) + u \: \nabla \cdot V \beta'(u) \leq f \: \beta'(u) \quad \textit{in}\ \D'(Q),
		$$
		for any $\beta \in \mathcal C^1(\RR)$ such that $\beta'\leq 1$ and $\beta'(1)=1$. 
	\end{proposition}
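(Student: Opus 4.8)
The plan is to start from the weak formulation and establish the renormalized inequality by a regularization-and-truncation argument, using the crucial observation that the $p$-Laplacian diffusion term behaves well under composition with Lipschitz nonlinearities. The core difficulty is purely about the time-derivative and transport terms, since $\beta(u)$ introduces a nonlinearity in $u$ that must be justified via the DiPerna–Lions renormalization machinery; the elliptic part $-\Delta_p v$ is left untouched because the constraint $u \in \signp(v)$ links $u$ and $v$ precisely on the level set $[v>0]$ where $u=1$, so $\beta'(1)=1$ is exactly what is needed to recover $-\Delta_p v$ rather than $-\beta'(u)\,\Delta_p v$.

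\smallskip

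First I would rewrite the weak formulation as the distributional identity
$$
\partial_t u - \Delta_p v + \nabla\cdot(u\,V) = f \quad \text{in } \D'(Q),
$$
and regularize in space, for instance by mollification $u^\delta := u * \rho_\delta$ (with care near $\partial\Omega$), so that $u^\delta$ is smooth enough to apply the chain rule. The transport term $\nabla\cdot(u\,V)$ is handled by the DiPerna–Lions commutator lemma: the commutator $\nabla\cdot(u^\delta V) - (\nabla\cdot(uV))*\rho_\delta$ converges to zero strongly in $L^1_{loc}$ as $\delta\to 0$, using $V\in W^{1,p'}$ and $u\in L^\infty$. This is the standard renormalization step and yields, after multiplying the regularized equation by $\beta'(u^\delta)$ and passing to the limit,
$$
\partial_t \beta(u) + V\cdot\nabla\beta(u) + u\,(\nabla\cdot V)\,\beta'(u) - \beta'(u)\,\Delta_p v \leq f\,\beta'(u).
$$

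\smallskip

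The step requiring the most care is the passage from $\beta'(u)\,\Delta_p v$ to $\Delta_p v$. Here I would exploit the constraint $u\in\signp(v)$: on $[v>0]$ one has $u=1$, so $\beta'(u)=\beta'(1)=1$ and the two terms coincide; on $[v=0]$, the potential $v$ vanishes on a full-measure portion of its level set, and since $v\in L^p(0,T;W^{1,p}_D)$ with $\nabla v = 0$ a.e. on $[v=0]$, the term $\Delta_p v$ (interpreted weakly against test functions, i.e. $\nabla^{p-1}v\cdot\nabla\varphi$) is supported where $\nabla v\neq 0$, which is contained in $[v>0]$ up to a null set. Thus testing against $\varphi\geq 0$ gives $\iint \nabla^{p-1}v\cdot\nabla(\beta'(u)\varphi) = \iint \nabla^{p-1}v\cdot\nabla\varphi$ once the factor $\beta'(u)=1$ is absorbed on the relevant set, which is precisely what converts the inequality into its stated form with the full $-\Delta_p v$.

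\smallskip

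The main obstacle I anticipate is making the boundary behavior rigorous: the mollification must respect the Dirichlet condition $v=0$ on $\Sigma_D$ and the Neumann-type flux condition on $\Sigma_N$, so the commutator estimate and the integration by parts producing the $u\,(\nabla\cdot V)\,\beta'(u)$ term must be localized or combined with the hypotheses \eqref{HypV0}–\eqref{HypVstg} on $V\cdot\nu$. The inequality (rather than equality) arises from the one-sided nature of the constraint coupling and the sign of the discarded diffusion contribution on $[v=0]$; I would track the sign of $\iint \nabla^{p-1}v \cdot \nabla v \,(1-\beta'(u))$-type remainder terms, using $\beta'\leq 1$ together with the monotonicity $\nabla^{p-1}v\cdot\nabla v = |\nabla v|^p \geq 0$ to guarantee the correct direction of the inequality. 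I expect the technical verifications of these commutator and boundary estimates to be deferred to the appendix, as the excerpt's roadmap indicates.
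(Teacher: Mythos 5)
Your treatment of the transport term matches the paper's: its Lemma A2 also mollifies, multiplies the regularized equation by $\beta'(u_\eps)$, and kills the commutator $V\cdot\nabla u_\eps-(V\cdot\nabla u)_\eps$ in $L^1_{loc}$ using $\nabla\cdot V\in L^\infty$ and the DiPerna--Lions/Ambrosio estimate; and you correctly sense that $\beta'(1)=1$ together with $u\in\signp(v)$ is what allows the full $-\Delta_p v$ to survive. The genuine gap is in your mechanism for the diffusion term. After multiplying the mollified equation by $\beta'(u_\delta)$ you face the product $\beta'(u_\delta)(\Delta_p v)_\delta$, and there is no way to pass to the limit in it: $(\Delta_p v)_\delta$ converges only in $\D'(Q)$ (indeed $\Delta_p v=\partial_t u+\nabla\cdot(u\,V)-f$ is a genuine distribution, not an $L^1_{loc}$ function), while $\beta'(u_\delta)$ converges only a.e., and the product of a distributional limit with an a.e.\ limit is undefined; moving $\beta'(u_\delta)$ inside the divergence instead produces a commutator involving $\beta''(u_\delta)\nabla u_\delta$, which is uncontrolled since $u$ has no Sobolev regularity. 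Your support argument cannot repair this: the expression $\iint\nabla^{p-1}v\cdot\nabla\left(\beta'(u)\varphi\right)$ is ill-defined because $\beta'(u)$ is merely $L^\infty$ and cannot be differentiated. It is true that $\beta'(u)\,\nabla^{p-1}v=\nabla^{p-1}v$ a.e.\ (since $\nabla v=0$ a.e.\ on $[v=0]$ and $u=1$ on $[v>0]$), but this identity lives at the level of the flux, not of the distribution $\Delta_p v$, which in general charges the free boundary $\partial[v>0]$, so "$\beta'(u)\Delta_p v=\Delta_p v$" does not follow.

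The paper avoids the product $\beta'(u)\Delta_p v$ altogether, and this is the idea you are missing. It first proves a separate one-sided \emph{stationary} inequality (Lemma A1): testing the weak formulation with time-averaged test functions $\xi(x)\psi(t)\frac{1}{h}\int_t^{t+h}\heps(v(s,x))\,ds$ and exploiting $u\,\heps(v)=\heps(v)$, it obtains $-\Delta_p v+(\nabla\cdot V-f)\,\sign_0(v)\le 0$ in $\D'$; this is where the sign $\left|\nabla v\right|^p(\heps)'(v)\ge 0$ and the boundary hypotheses on $V$ actually enter. Then, in the mollified renormalization step, the equation is multiplied by $\beta'(u_\eps)$ \emph{and} this one-sided inequality is multiplied by the nonnegative factor $1-\beta'(u_\eps)$ (this is exactly where $\beta'\le 1$ is used); adding the two reconstitutes the diffusion term with coefficient $\beta'(u_\eps)+(1-\beta'(u_\eps))=1$, so the full $-\Delta_p v$ appears without ever being multiplied by $\beta'$. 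The hypothesis $\beta'(1)=1$ is used only at the very end, to identify $\beta'(u)\chi_{[v=0]}+\sign_0(v)$ with $\beta'(u)$ a.e.\ via the constraint $u\in\signp(v)$. Without an analogue of Lemma A1 and this $1-\beta'\ge 0$ trick, your scheme cannot close; your anticipated obstacle (mollification respecting the boundary conditions) is, by contrast, a non-issue for the proposition itself, which is purely interior.
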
 
	
	We postpone the proof of this proposition to the appendix.
	
	\begin{proposition}[Kato's inequality]\label{PKato}
		If $(u_1,v_1)$ and $(u_2,v_2)$ are two weak solutions of \eqref{cmef} associated with $f_1,f_2 \in L^1(Q)$, then there exists $\kappa\in L^\infty(Q)$ such that $\kappa\in \signp(u_1-u_2)$, a.e. in $Q$, and 
		$$
		\partial_ t	\left|u_1-u_2 \right|- \Delta_p  (v_1  +  v_2) + \nabla \cdot \left( \left| u_1-u_2 \right| \: V \right) 
		$$
		\begin{equation}\label{ineqkato}
			\hspace*{6cm} \leq  \kappa  (f_1-f_2)  \quad \mathit{in} \ \D^\prime (Q).
		\end{equation}
	\end{proposition}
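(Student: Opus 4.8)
The plan is to derive Kato's inequality for the difference of two weak solutions by combining the renormalized formulation of Proposition~\ref{prenormal} with the doubling-variables technique of Kruzhkov, adapted to the degenerate $p$-Laplacian diffusion. I would start from the renormalized inequality applied to $u_1$ and $u_2$ separately, choosing a family of approximations $\beta = \beta_\delta$ of the function $r \mapsto (r)^+$ (smoothed so that $\beta_\delta' \leq 1$ and $\beta_\delta'(1) = 1$), and let $\delta \to 0$ so that $\beta_\delta'(u_1 - u_2) \to \kappa \in \signp(u_1 - u_2)$. The central difficulty is that the diffusion terms $\Delta_p v_1$ and $\Delta_p v_2$ are \emph{nonlinear} in $\nabla v$, so they do not combine additively; one cannot simply subtract the two equations. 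The role of the constraint $u_i \in \signp(v_i)$ is precisely to control the sign of the crossed diffusion contributions on the set where $u_1 \neq u_2$, and extracting the term $-\Delta_p(v_1 + v_2)$ in \eqref{ineqkato} (rather than $-\Delta_p v_1 - \Delta_p v_2$, which would be the naive guess) is the signature that a genuinely nonlinear cancellation is being exploited.

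Concretely, I would double the variables, writing the equations for $u_1(t,x)$ and $u_2(s,y)$ on $Q \times Q$ and testing against a function of the form $\zeta(t,x,s,y) = \phi\!\left(\tfrac{t+s}{2}, \tfrac{x+y}{2}\right) \rho_n(t-s)\rho_n(x-y)$, where $\rho_n$ is a standard mollifier. The time- and space-de-doubling ($n \to \infty$) is routine for the transport and source terms by continuity in $L^1$, using $(u,v) \in C([0,T); L^1(\Omega))$; the delicate part is the diffusion. For the $p$-Laplacian I would use the pointwise monotonicity inequality
$$
\left( \nabla^{p-1} v_1 - \nabla^{p-1} v_2 \right) \cdot \nabla (v_1 - v_2) \geq 0,
$$
combined with the fact that on $[u_1 > u_2]$ one necessarily has $v_1 \geq v_2$ (a consequence of $u_i \in \signp(v_i)$ and $v_i \geq 0$), so that the crossed gradient terms carry a favourable sign after passing to the renormalized formulation. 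This is where the two diffusion operators merge into the single $\Delta_p(v_1 + v_2)$: the off-diagonal contributions produced by the doubling, once the sign information is inserted, reassemble into the diffusion of the sum.

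The transport term requires care because of the drift $V$ and the zeroth-order term $u \,\nabla \cdot V\, \beta'(u)$. Here I would exploit that in the renormalized inequality the drift already appears in the conservative form $V \cdot \nabla \beta(u) + u\,(\nabla\cdot V)\,\beta'(u)$, which is exactly $\nabla \cdot(\beta(u) V)$ plus a corrector that vanishes in the limit $\beta_\delta \to (\cdot)^+$ on the set $[u_1 = u_2]$; passing $\delta \to 0$ then yields $\nabla \cdot (|u_1 - u_2|\, V)$. The boundary behaviour is controlled by the hypotheses \eqref{HypV0}--\eqref{HypVstg} on $V \cdot \nu$, which guarantee that no uncontrolled boundary flux appears on $\Gamma_D$, while the zero-flux condition on $\Gamma_N$ handles the Neumann part; these are invoked through the functions $\xi_h, \nu_h$ of \eqref{xih} in the standard way.

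I expect the main obstacle to be the simultaneous limit in the two regularization parameters: one must let the mollification parameter $n \to \infty$ (de-doubling) and the renormalization parameter $\delta \to 0$ in a compatible order, keeping the nonlinear diffusion term under control throughout. The weak-$*$ and weak-$L^p$ convergences available for $\nabla^{p-1} v_i$ are not strong enough to pass to the limit naively in the nonlinear term, so the argument must be organized so that the only use of the diffusion is through the monotonicity inequality and the sign of $v_1 - v_2$ on $[u_1 \neq u_2]$ — never through a limit of the nonlinear flux itself. Closing this gap cleanly, so that the final distributional inequality \eqref{ineqkato} holds against all nonnegative test functions, is the technical heart of the proof.
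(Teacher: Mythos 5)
Your skeleton --- the renormalized formulation of Proposition \ref{prenormal} combined with Kruzhkov doubling and de-doubling --- is indeed the paper's route, but the step you single out as the technical heart, namely handling the diffusion through the monotonicity inequality $\left(\nabla^{p-1}v_1-\nabla^{p-1}v_2\right)\cdot\nabla(v_1-v_2)\geq 0$ together with the sign of $v_1-v_2$ on $[u_1>u_2]$, is not how the proof goes, and as sketched it does not close. The actual mechanism uses no monotonicity at all. Applying Proposition \ref{prenormal} with $\beta_\eps(r)=\widetilde\heps(r+\eps-k)$, $k\leq 1$, and letting $\eps\to 0$ gives the entropy inequality \eqref{evolentropic+}, in which the diffusion appears as the \emph{full} term $-\Delta_p v$, with coefficient one and no sign factor in front: this is exactly what the constraint $u\in\signp(v)$ and the normalization $\beta'(1)=1$ buy, since $v$ lives on the set $[u=1]$. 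One then takes $k=u_2(s,y)$ in the inequality for $(u_1,v_1)$ and $k=u_1(t,x)$ in the inequality for $(u_2,v_2)$; since
$$
\int_\Omega \nabla_y^{p-1}v_2(s,y)\cdot\nabla_x\zeta(x,y)\,dx=0, \qquad \zeta(\cdot,y)\in\D(\Omega),
$$
(and symmetrically in $y$), each inequality can be symmetrized for free, and after adding them the diffusion pairs as $\left(\nabla_x^{p-1}v_1(t,x)+\nabla_y^{p-1}v_2(s,y)\right)\cdot(\nabla_x+\nabla_y)\zeta_\lambda$, where $(\nabla_x+\nabla_y)\zeta_\lambda$ carries no derivative of the mollifier. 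De-doubling the diffusion is then a routine diagonal limit of a \emph{fixed} $L^{p'}$ function; at no point does one pass to the limit in a nonlinear expression of converging gradients, and there are no off-diagonal diffusion contributions to cancel. By contrast, your monotonicity step pairs the \emph{difference} of the fluxes against $\nabla(v_1-v_2)$, but nothing in the doubled formulation ever tests the equations against $v_1-v_2$ (the test function is $\zeta_\lambda$), and the target \eqref{ineqkato} carries the \emph{sum} $v_1+v_2$; your sketch offers no mechanism by which such crossed terms would arise, let alone reassemble into $-\Delta_p(v_1+v_2)$. In short, your premise that ``the diffusion terms do not combine additively'' is precisely what the entropy formulation removes: the problem is additive in the diffusion by construction.

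Two smaller misplacements. Proposition \ref{prenormal} renormalizes a single solution, so an object like $\beta_\delta'(u_1-u_2)$ is never formed; the multiplier $\kappa$ arises as the limit, under de-doubling, of the terms $\hbox{Sign}_1\left(u_1(t,x)-u_2(s,y)\right)$ multiplying $f_1-f_2$. And the boundary apparatus --- the functions $\xi_h,\nu_h$ of \eqref{xih} and hypotheses \eqref{HypV0}--\eqref{HypVstg} --- plays no role in Proposition \ref{PKato}, which is a purely interior statement in $\D'(Q)$; it enters only afterwards, in Theorem \ref{compcmef}, together with Lemma \ref{leqp+} (inequality \eqref{eqp+1}), when one sends $\xi_h\to 1$ to obtain the $L^1$-contraction.
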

	
	\begin{proof} 
		First, we see that if $(u,v)$ is a weak solution of \eqref{cmef}, then  
		$$
		\partial_t \left|u-k\right|- \Delta_p v + \nabla \cdot (\left|u-k\right|\:  V) + k \: \nabla \cdot V \: \hbox{Sign}_1 (u-k)   
		$$
		\begin{equation} \label{evolentropic+}
			\hspace*{6cm} \leq f \: \hbox{Sign}_1 (u-k) \quad \mathrm{in} \ \D^\prime (Q),		 			
		\end{equation}
		for any $k\leq 1$, where 
		$$
		\sign_1(r) = \left\{ 
		\begin{array}{ccl}
			1 & \mathrm{if} & r \geq 0 \\  
			-1 & \mathrm{if} & r<0.
		\end{array}
		\right.
		$$   	
		Indeed, it is enough to take in Proposition \ref{prenormal} 
		$$
		\beta_\epsilon(r)=\widetilde \heps (r+\eps-k), \quad r\in \R, 
		$$
		where
		$$
		\widetilde{\heps}(r) = \left\{ 
		\begin{array}{ccl}   
			r-{\eps}/{2}   & \mathrm{if} & r>\eps\\
			r^2/2\eps  & \mathrm{if} & \left|r\right|\leq \eps \\
			-r-{\eps}/{2}   & \mathrm{if} & r<-\eps ,
		\end{array}   
		\right.  
		$$
		and let $\eps\to 0$. Notice that, since $k\leq 1$, we have $\beta_\epsilon'(1)= \widetilde \heps' (1+\eps-k)= 1$ 
		and 
		$$
		\beta_\eps'(u) = \widetilde \heps'(u+\eps-k)\to \hbox{Sign}_1 (u-k),\quad \hbox{ as }\eps\to 0.
		$$ 
		
		The proof is now based on the doubling and de-doubling variables technique. Let us briefly revisit the arguments for the reader's convenience. Since $u_2(s,y)\leq 1$, we use the fact that $(u_1,v_1)$ satisfies  \eqref{evolentropic+}  with $k=u_2(s,y), $ to get  \begin{eqnarray*}
			\frac{d}{dt} \int_\Omega \left|u_1(t,x)-u_2(s,y) \right| \zeta(x,y)\, dx \hspace*{6cm}  \\
			+ \int_\Omega \left( \nabla_x^{p-1} v_1 (t,x) - \left| u_1(t,x)-u_2(s,y) \right| V(x) \right) \cdot \nabla_x \zeta (x,y)\, dx\\      
			+ \int_\Omega u_2(s,y) \left( \nabla_x \cdot V \right) \hbox{Sign}_1 \left( u_1(t,x)-u_2(s,y) \right) \zeta(x,y)\, dx\\
			\leq \int_\Omega f_1(t,x)  \, \hbox{Sign}_1 \left( u_1(t,x)-u_2(s,y) \right) \zeta(x,y)\, dx, \hspace*{3cm} 
		\end{eqnarray*} 	
	for any  $0\leq \zeta \in \D (\Omega\times \Omega)$, where    $\frac{d}{dt}$ is taken in $\D'(0,T).$ 
		Note that 
		$$ \int_\Omega \nabla_y^{p-1} v_2 (s,y) \cdot \nabla_x\zeta  \, dx =0,$$ 
		so that
		\begin{eqnarray*}
			\frac{d}{dt} \int_\Omega \left|u_1(t,x)-u_2(s,y) \right| \zeta \, dx \hspace*{6cm} \\
			+  \int_\Omega \left( \nabla_x^{p-1} v_1(t,x) + \nabla_y^{p-1} v_2(s,y) \right) \cdot \nabla_x\zeta \, dx\\
			-  \int_\Omega \left| u_1(t,x)-u_2(s,y) \right| V(x) \cdot \nabla_x\zeta \, dx \\   
			+ \int_\Omega u_2(s,y) \left( \nabla_x \cdot V \right) \hbox{Sign}_1 \left( u_1(t,x)-u_2(s,y) \right) \zeta \, dx \\
			\leq \int_\Omega f_1(t,x) \, \hbox{Sign}_1 \left( u_1(t,x)-u_2(s,y) \right) \zeta\, dx.  \hspace*{3cm} 
		\end{eqnarray*} 
		Denoting
		$$
		u(t,s,x,y):=u_1(t,x)-u_2(s,y),
		$$
		and integrating with respect to $y$, we obtain
		\begin{eqnarray*}
			\frac{d}{dt} \int_\Omega\!\int_\Omega \left|u(t,s,x,y) \right| \zeta \, dx dy  \hspace*{6cm} \\
			+ \int_\Omega\!\int_\Omega \left( \nabla_x^{p-1} v_1 (t,x) +\nabla_y^{p-1} v_2 (s,y) \right) \cdot \nabla_x\zeta\, dxdy \\
			-  \int_\Omega\!\int_\Omega \left| u(t,s,x,y) \right| V(x) \cdot \nabla_x\zeta \, dxdy \\   
			+ \int_\Omega\!\int_\Omega u_2(s,y) \left( \nabla_x \cdot V \right) \hbox{Sign}_1 \left( u(t,s,x,y) \right) \zeta\, dxdy\\
			\leq \int_\Omega\!\int_\Omega f_1(t,x) \, \hbox{Sign}_1 \left( u(t,s,x,y) \right) \zeta\, dxdy. \hspace*{3cm}   
		\end{eqnarray*} 
		
		On the other hand, using the fact that $(u_2,v_2)$ satisfies \eqref{evolentropic+} with $k=u_1(t,x)$, we have
		\begin{eqnarray*}
			\frac{d}{ds} \int_\Omega \left|u(t,s,x,y) \right| \zeta (x,y) \, dy \hspace*{6cm} \\
			+ \int_\Omega \left( \nabla_y^{p-1} v_2 (s,y) - \left| u(t,s,x,y) \right| V(y) \right) \cdot \nabla_y \zeta (x,y) \, dy \\
			- \int_\Omega u_1(t,x) \left( \nabla_y \cdot V \right) \hbox{Sign}_1 (u(t,s,x,y)) \: \zeta (x,y) \, dy \\   
			\leq - \int_\Omega f_2(s,y) \, \hbox{Sign}_1 ( u(t,s,x,y) ) \: \zeta (x,y) \, dy,  \hspace*{3cm} 
		\end{eqnarray*}
		where, again, $\frac{d}{ds}$ is taken in $\D'(0,T)$. Working in the same way, we get
		\begin{eqnarray*}
			\frac{d}{ds} \int_\Omega\!\int_\Omega \left|u(t,s,x,y) \right| \zeta \, dx dy  \hspace*{6cm} \\
			+  \int_\Omega\!\int_\Omega \left( \nabla_x^{p-1} v_1 (t,x) + \nabla_y^{p-1} v_2 (s,y) \right) \cdot \nabla_y\zeta\, dx dy \\
			-   \int_\Omega\!\int_\Omega \left|u(t,s,x,y) \right| V(y) \cdot \nabla_y\zeta \, dxdy \\   
			- \int_\Omega\!\int_\Omega  u_1(t,x) \left( \nabla_y \cdot V \right)  \hbox{Sign}_1 \left( u(t,s,x,y) \right) \zeta\, dxdy\\
			\leq  - \int_\Omega\!\int_\Omega f_2(s,y) \, \hbox{Sign}_1 \left( u(t,s,x,y) \right) \zeta\, dxdy. \hspace*{3cm}   
		\end{eqnarray*}
		Adding both inequalities, we obtain
		\begin{eqnarray}
			\left( \frac{d}{dt} + \frac{d}{ds} \right) \int_\Omega\!\int_\Omega \left|u(t,s,x,y) \right| \zeta \, dx dy  \hspace*{5cm} \label{formdoubling1} \\
			+  \int_\Omega\!\int_\Omega  \left( \nabla_x^{p-1} v_1 (t,x) +\nabla_y^{p-1} v_2 (s,y) \right) \cdot \left( \nabla_x + \nabla_y \right) \zeta\, dx dy \nonumber\\
			-   \int_\Omega\!\int_\Omega \left|u(t,s,x,y) \right| \left( V(x) \cdot \nabla_x\zeta + V(y) \cdot \nabla_y\zeta \right) dxdy \nonumber\\   
			+ \int_\Omega\!\int_\Omega \left( u_2(s,y) \left( \nabla_x \cdot V \right) - u_1(t,x) \left( \nabla_y \cdot V \right)   \right) \hbox{Sign}_1 \left( u(t,s,x,y) \right) \zeta\, dxdy\nonumber\\
			\leq  \int_\Omega\!\int_\Omega \left( f_1(t,x) - f_2(s,y) \right) \hbox{Sign}_1 \left( u(t,s,x,y) \right) \zeta\, dxdy, \hspace*{2cm}   \nonumber 
		\end{eqnarray} 
		where $\frac{d}{dt}+\frac{d}{ds}$ is taken in $\D'\left( (0,T)\times (0,T) \right)$. 
		
		We can now de-double the variables $t$ and $s$, as well as $x$ and $y$, by taking as usual the sequences of test functions  
		$$  
		\psi_\eps(t,s) = \psi\left( \frac{t+s}{2}\right)   \rho_\eps \left( \frac{t-s}{2}\right)
		$$
		and
		$$
		\zeta_\lambda  (x,y) = \xi \left( \frac{x+y}{2}\right) \delta_\lambda \left( \frac{x-y}{2}\right),
		$$
		for any $t,s \in (0,T)$ and $x,y \in \Omega$. Here, $\psi, \xi \in \D(\Omega)$, and $\rho_\eps, \delta_\lambda$ are sequences of standard mollifiers in $\R$ and $\R^N$, respectively. 
		Observe that 
		$$  
		\left( \frac{d}{dt} + \frac{d}{ds} \right) \psi_\eps(t,s) = \rho_\eps \left( \frac{t-s}{2}\right) \psi^\prime \left( \frac{t+s}{2} \right)   
		$$ 
		and 
		$$
		\left( \nabla_x+ \nabla_y \right) \zeta_\lambda (x,y) = \delta_\lambda \left( \frac{x-y}{2}\right)\nabla \xi \left( \frac{x+y}{2}\right).  
		$$
		Moreover, for any $h\in L^1 \left( (0,T)^2 \times \Omega^2 \right)$ and $\Phi \in L^1 \left( (0,T)^2 \times \Omega^2 \right)^N$,   we have 
		$$
		\lim_{\lambda \to 0} \lim_{\eps \to 0} \int_0^T\!\!\!\int_0^T\!\!\!\int_\Omega\!\int_\Omega h(t,s,x,y)\, \zeta_\lambda(x,y)\, \psi_\eps(t,s)  \, ds dt dx dy
		$$
		$$
		= \int_0^T\!\!\!\int_\Omega  h(t,t,x,x)\, \xi(x)\,  \psi(t) \, dtdx,
		$$
		
		$$
		\lim_{\lambda \to 0} \lim_{\eps \to 0} \int_0^T\!\!\!\int_0^T\!\!\!\int_\Omega\!\int_\Omega h(t,s,x,y)\, \zeta_\lambda(x,y)\, \left( \frac{d}{dt} + \frac{d}{ds} \right) \psi_\eps(t,s) \, dsdtdxdy  
		$$
		$$
		= \int_0^T\!\!\!\int_\Omega h(t,t,x,x)\, \xi(x)\, \psi^\prime (t)\,  dtdx
		$$ 
		and		 		
		$$
		\lim_{\lambda \to 0} \lim_{\eps \to 0} \int_0^T\!\!\!\int_0^T\!\!\!\int_\Omega\!\int_\Omega \Phi(t,s,x,y) \cdot  (\nabla_x + \nabla_y) \, \zeta_\lambda (x,y) \, \psi_\eps (t,s) \, dsdtdxdy  
		$$
		$$
		= \int_0^T\!\!\!\int_\Omega \Phi(t,t,x,x) \cdot \nabla \xi (x)\, \psi(t)\, dtdx.
		$$
		Thus, replacing $\zeta$ in \eqref{formdoubling1} by $\zeta_\lambda$, testing with $\psi_\eps$ and letting $\eps\to 0$ and $\lambda \to 0$, we obtain (see, for instance, \cite{Ig2023})
		\begin{eqnarray*}
			\frac{d}{dt} \int_\Omega \left| u_1-u_2 \right| \, \xi \, dx + \int_\Omega \left( \nabla^{p-1} v_1 + \nabla^{p-1} v_2 \right) \cdot \nabla \xi \, dx \\
			- \int_\Omega \left| u_1-u_2 \right| \left( V \cdot \nabla \xi - \left( \nabla \cdot V \right) \xi \right) dx  \\
			\leq  \int_\Omega \kappa \, (f_1-f_2) \, \xi \, dx + \int_\Omega \left| u_1-u_2 \right| \left( \nabla \cdot V \right) \xi  \, dx,   
		\end{eqnarray*}
		where $\frac{d}{dt}$ is taken in $\D'(0,T)$. We conclude, as desired, that
		\begin{eqnarray*}
			\frac{d}{dt} \int_\Omega \left| u_1-u_2 \right| \, \xi \, dx + \int_\Omega \nabla^{p-1} \left( v_1 + v_2 \right) \cdot \nabla \xi \, dx \\
			- \int_\Omega \left| u_1-u_2 \right| \, V \cdot \nabla \xi \, dx  \\
			\leq  \int_\Omega \kappa \, (f_1-f_2) \, \xi \, dx,   \quad \hbox{ in }\D'(0,T).
		\end{eqnarray*}
	\end{proof}
	
	The idea behind the proof of the next theorem is to consider the sequence of test functions $\xi_h$ given by \eqref{xih} in Kato's inequality and let $h\to 0,$ to get the contraction inequality \eqref{contraction}. 
	
	\begin{theorem} \label{compcmef}
		If $(u_1,v_1)$ and $(u_2,v_2)$ are two weak solutions of \eqref{cmef} associated with $f_1,f_2 \in L^1(Q)$, respectively, then there exists $\kappa \in L^\infty(Q)$ such that $\kappa\in \signp(u_1-u_2)$, a.e. in $Q$, and 
		\begin{equation}\label{contraction}
			\frac{d}{dt} \int_\Omega \left|  u_1-u_2 \right| dx  \leq \int_\Omega \kappa \left( f_1-f_2 \right) dx, \quad \mathit{in} \ \D'(0,T). 
		\end{equation}
	\end{theorem}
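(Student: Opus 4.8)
The plan is to use the family $\xi_h$ from \eqref{xih} as test functions in Kato's inequality and let $h\to 0$. First note that $u_i\in\signp(v_i)$ with $v_i\ge 0$ forces $0\le u_i\le 1$, so $|u_1-u_2|\le 1$ and $|u_1-u_2|\,V\in L^{p'}(\Omega)$ for a.e.\ $t$. Since $\xi_h$ is Lipschitz and vanishes on all of $\partial\Omega$, we have $\xi_h\in W^{1,p}_0(\Omega)$, and every term in the tested form of Kato's inequality from Proposition \ref{PKato} depends continuously on $\xi$ in the $W^{1,p}(\Omega)$ topology (the diffusion flux lies in $L^{p'}$ and $|u_1-u_2|V\in L^{p'}$). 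Approximating $\xi_h$ in $W^{1,p}$ by nonnegative functions in $\D(\Omega)$, the tested inequality extends to $\xi=\xi_h$, with the same $\kappa$ given by Proposition \ref{PKato}:
$$\frac{d}{dt}\int_\Omega |u_1-u_2|\,\xi_h + A_h - B_h \le \int_\Omega \kappa\,(f_1-f_2)\,\xi_h \quad\text{in } \D'(0,T),$$
where $A_h:=\int_\Omega \nabla^{p-1}(v_1+v_2)\cdot\nabla\xi_h$ and $B_h:=\int_\Omega |u_1-u_2|\,V\cdot\nabla\xi_h$. Because $0\le\xi_h\le 1$ and $\xi_h\to 1$ pointwise, dominated convergence gives $\int_\Omega|u_1-u_2|\xi_h\to\int_\Omega|u_1-u_2|$ and $\int_\Omega\kappa(f_1-f_2)\xi_h\to\int_\Omega\kappa(f_1-f_2)$, so \eqref{contraction} follows once we prove, after pairing with an arbitrary $0\le\psi\in\D(0,T)$, that $\liminf_{h\to0}\int_0^T(A_h-B_h)\,\psi\,dt\ge 0$.

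By \eqref{xih}, $\nabla\xi_h$ is supported in the strip $\{d(x,\partial\Omega)<h\}$, where $h\nu_h=\nu(\pi)$ yields $\nabla\xi_h=-\frac{1}{h}\,\nu(\pi(x))$. Hence $B_h=-\frac1h\int_{\{d<h\}}|u_1-u_2|\,V\cdot\nu(\pi)$, and the structural hypothesis \eqref{HypVstg}, applied for a.e.\ $t$ to the nonnegative function $|u_1-u_2|(t,\cdot)\in L^p(\Omega)$, gives $\limsup_{h\to0}B_h\le 0$. Since moreover $B_h(t)\le\frac1h\int_{\{d<h\}}|V|$, a quantity that remains bounded as $h\to0$ by the boundary mass of $V$, reverse Fatou's lemma allows passing the $\limsup$ under the time integral, so $\limsup_{h\to0}\int_0^T B_h\,\psi\,dt\le 0$.

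For the diffusion term, $A_h=-\frac1h\int_{\{d<h\}}\nabla^{p-1}(v_1+v_2)\cdot\nu(\pi)$, and I would determine the sign of the limiting boundary flux from the data of \eqref{cmef}. On $\Gamma_D$ we have $v_i\ge 0$ and $v_i=0$, so the outward normal derivative satisfies $\nabla v_i\cdot\nu\le 0$, whence $\nabla^{p-1}(v_1+v_2)\cdot\nu\le 0$; on $\Gamma_N$ the no-flux condition $(\nabla^{p-1}v_i-u_iV)\cdot\nu=0$ combined with $V\cdot\nu=0$ forces $\nabla^{p-1}v_i\cdot\nu=0$. Thus the limiting boundary flux is nonpositive, which yields $\liminf_{h\to0}A_h\ge 0$; together with the transport estimate this gives $\liminf_{h\to0}\int_0^T(A_h-B_h)\,\psi\,dt\ge 0$ and hence \eqref{contraction}.

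The hard part will be making the diffusion step rigorous. Unlike the transport term, $A_h$ cannot be controlled by a pointwise domination, since $|\nabla(v_1+v_2)|^{p-1}$ has no $L^1$ trace on $\partial\Omega$ for a generic $W^{1,p}$ function, so neither Fatou nor reverse Fatou applies to the boundary-layer integral. The correct argument must instead read the limit of $\frac1h\int_{\{d<h\}}\nabla^{p-1}(v_1+v_2)\cdot\nu(\pi)$ through the weak normal-trace pairing associated with the fluxes $\nabla^{p-1}v_i$ --- well defined because each $v_i$ solves \eqref{cmef}, so $\Delta_p v_i=\partial_t u_i+\nabla\cdot(u_iV)-f_i$ is a distribution with enough regularity to give the flux a normal trace --- and then exploit its sign on $\Gamma_D$ and its vanishing on $\Gamma_N$ directly at the level of the time-integrated inequality. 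Justifying this passage, simultaneously with the interchange of $\lim_{h\to0}$ and the distributional derivative $\frac{d}{dt}$, is the main technical obstacle; all remaining steps reduce to the elementary limits above and the hypotheses \eqref{HypV0}--\eqref{HypVstg}.
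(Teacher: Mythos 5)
Your plan coincides with the paper's up to the decisive step: the paper also tests Kato's inequality \eqref{ineqkato} with $\xi_h$ from \eqref{xih}, splits the resulting boundary-layer term into the diffusion part $A_h$ and the transport part $B_h$, and disposes of $B_h$ exactly as you do, via \eqref{HypVstg} (your reverse-Fatou justification for passing the $\limsup$ under the time integral is a reasonable way to make the paper's terse statement precise). The gap is the diffusion term: you only give the formal sign argument ($\nabla v_i\cdot\nu\le 0$ on $\Gamma_D$ because $v_i\ge 0$ and $v_i=0$ there; zero flux on $\Gamma_N$) and then concede that making it rigorous is ``the main technical obstacle''. That obstacle is precisely where the proof lives, so the proposal is incomplete at its only nontrivial point.

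The idea you are missing is Lemma \ref{leqp+}: every weak solution satisfies \eqref{eqp+1}, namely
$$-\Delta_p v_i + (\nabla\cdot V - f_i)\,\sign_0(v_i)\le 0\quad\hbox{in }\D'\left((0,T)\times\overline\Omega\right),$$
where the point is that the inequality holds against nonnegative test functions that need \emph{not} vanish on $\partial\Omega$; this is the rigorous carrier of your boundary-sign heuristic, and it is proved by testing the weak formulation with $\xi\,\heps(v_i)\,\psi$, $\xi\in\D(\overline\Omega)$ --- the factor $\heps(v_i)$ supplies the vanishing on $\Gamma_D$, while $u_i\,\heps(v_i)=\heps(v_i)$ and $\left|\nabla v_i\right|^p ({\heps})'(v_i)\ge 0$ do the rest. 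With this lemma in hand, the paper writes $\nabla\xi_h=-\nabla(1-\xi_h)$ and tests \eqref{eqp+1} with $(1-\xi_h)\psi\ge 0$ to get
$$\int_0^T\!\!\!\int_\Omega \nabla^{p-1}v_i\cdot\nabla\xi_h\,\psi\,dtdx\;\ge\;\int_0^T\!\!\!\int_\Omega(\nabla\cdot V - f_i)(1-\xi_h)\,\sign_0(v_i)\,\psi\,dtdx\;\longrightarrow\;0,$$
since $1-\xi_h\to 0$ weakly-$\ast$ in $L^\infty(\Omega)$; no normal trace is ever invoked. Your proposed substitute would not easily close: the divergence of the flux $\nabla^{p-1}v_i$ is $\partial_t u_i+\nabla\cdot(u_iV)-f_i$, and $\partial_t u_i$ is only a distribution (not an $L^1$ function), so a normal trace of the spatial flux at (a.e.) fixed time is not available; and even in a space-time divergence-measure framework, the weak normal trace on $\Sigma_D$ is merely a functional, whose nonpositivity cannot be read off from ``$v_i\ge 0$ and $v_i=0$ on $\Gamma_D$'' --- establishing that sign in the weak sense is exactly what Lemma \ref{leqp+} accomplishes. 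So the missing ingredient is this up-to-the-boundary inequality (or an equivalent), not a refinement of the trace argument.
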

	
	\begin{proof}
		Observe that, for $\xi_h$ given by \eqref{xih}, we have
		$$
		\frac{d}{dt}	\int_\Omega \left| u_1-u_2\right| dx - \int_\Omega \kappa \left( f_1-f_2 \right) dx 
		$$
		$$
		=\lim_{h\to 0} \ \underbrace{\frac{d}{dt} \int_\Omega \left| u_1-u_2\right|  \, \xi_h \: dx  -\int_\Omega  \kappa \left( f_1-f_2 \right) \, \xi_h \, dx}_{I(h)}.
		$$
		Taking $\xi_h$ as a test function in \eqref{ineqkato}, we obtain
		\begin{eqnarray*}
			I(h) & \leq & - \int_\Omega \left( \nabla^{p-1} v_1 + \nabla^{p-1} v_2 - \left| u_1-u_2\right| V \right) \cdot \nabla \xi_h \, dx \\ 
			& \leq & - \int_\Omega \left( \nabla^{p-1} v_1 + \nabla^{p-1} v_2\right) \cdot \nabla \xi_h \, dx - \int_\Omega \left| u_1-u_2\right| V \cdot \nu_h(x) \, dx. 
		\end{eqnarray*}	
		On the other hand, thanks to \eqref{eqp+1}, we see that, for each $i=1,2$, for any $0\leq \psi\in \D(0,T),$ we have
		\begin{eqnarray*}
			\int_0^T\!\!\!\int_\Omega  \nabla^{p-1} v_i \cdot \nabla \xi_h \, \psi\, dtdx & = & - \int_0^T\!\!\!\int_\Omega \nabla^{p-1} v_i  \cdot \nabla \left( 1- \xi_h \right) \psi\, dtdx \\ 
			& \geq & \int_0^T\!\!\!\int_\Omega \left( \nabla \cdot V - f_i \right) \left( 1- \xi_h \right) \sign_0(v_i)\, \psi\, dtdx.   
		\end{eqnarray*} 
		Letting $h\to 0$ and using the fact that $\xi_h \to 1$ in $L^\infty(\Omega)-\hbox{weak}^*$,  we deduce that    
		$$ 	
		\liminf_{h\to 0} \int_0^T\!\!\!\int_\Omega \nabla^{p-1} v_i \cdot \nabla \xi_h \, \psi\, dtdx \geq 0.
		$$ 
		Coming back to $I(h),$ we get   
		$$
		\lim_{h\to 0}I(h) \leq - \lim_{h\to 0} \int_\Omega \left|u_1-u_2 \right| V \cdot \nu_h(x) \, dx \leq  0,		
		$$
		using assumption \eqref{HypVstg}. Thus, we obtain \eqref{contraction}.
		
	\end{proof}

	An immediate consequence of Theorem \ref{compcmef} is the uniqueness of a solution for \eqref{cmef}. 
	\begin{corollary}\label{Cuniq}
		Under the assumptions of Theorem \ref{Theorem1}, the problem \eqref{cmef} has at most one solution. 
	\end{corollary}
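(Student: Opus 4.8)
The plan is to read off uniqueness directly from the $L^1$-contraction of Theorem \ref{compcmef}, upgrading the distributional inequality to pointwise control in time by means of the continuity built into the notion of weak solution, and then to recover the potential $v$ from the density $u$ using the strict monotonicity of the $p$-Laplacian.

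First I would take two weak solutions $(u_1,v_1)$ and $(u_2,v_2)$ of \eqref{cmef} associated with the \emph{same} data, i.e. $f_1=f_2=f$ and $u_1(0)=u_2(0)=u_0$. Applying Theorem \ref{compcmef} with $f_1-f_2\equiv 0$, the right-hand side of \eqref{contraction} vanishes and we obtain
$$\frac{d}{dt}\int_\Omega \left|u_1-u_2\right|\,dx \leq 0 \quad \mathit{in}\ \D'(0,T).$$
Since $u_1,u_2\in C\big([0,T),L^1(\Omega)\big)$, the map $t\mapsto \int_\Omega |u_1(t)-u_2(t)|\,dx$ is continuous on $[0,T)$, and a continuous function whose distributional derivative is nonpositive is nonincreasing. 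Hence it is bounded above by its value at $t=0$, namely $\int_\Omega |u_0-u_0|\,dx=0$; being also nonnegative, it must vanish identically. This yields $u_1=u_2=:u$ a.e. in $Q$.

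It remains to show that the potential is unique as well. Since $u_1=u_2$, subtracting the two weak formulations of Definition \ref{weaksol} makes every term except the diffusion cancel, leaving
$$\iint_Q \big(\nabla^{p-1}v_1-\nabla^{p-1}v_2\big)\cdot\nabla\xi\,\psi\,dx\,dt=0,$$
for all $\xi\in W^{1,p}_D(\Omega)$ and $\psi\in\D([0,T))$. Varying $\psi$ and exploiting the separability of $W^{1,p}_D(\Omega)$ to reduce to a countable dense family of test functions, I would deduce that for a.e. $t$ the function $v_1(t)-v_2(t)\in W^{1,p}_D(\Omega)$ is an admissible test function, so that
$$\int_\Omega \big(\nabla^{p-1}v_1-\nabla^{p-1}v_2\big)\cdot\big(\nabla v_1-\nabla v_2\big)\,dx=0.$$
By the strict monotonicity of the map $r\mapsto |r|^{p-2}r$, this forces $\nabla v_1=\nabla v_2$ a.e.; since both potentials vanish on $\Gamma_D$ and $\mathcal L^{N-1}(\Gamma_D)>0$, a Poincaré inequality yields $v_1=v_2$, completing the proof.

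I expect the only genuinely delicate point to be this last step, namely the passage from the separated test functions $\xi\,\psi$ appearing in Definition \ref{weaksol} to the time-dependent choice $v_1(t)-v_2(t)$, which requires a density/measurability argument in the $t$ variable. The uniqueness of $u$ itself is essentially automatic once Theorem \ref{compcmef} is available: the contraction estimate does all the work, and the corollary is indeed an immediate consequence of it.
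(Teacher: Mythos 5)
Your proposal is correct, and its first half is exactly the paper's own argument: the paper offers no written proof of Corollary \ref{Cuniq} beyond declaring it an immediate consequence of Theorem \ref{compcmef}, which is precisely your step of taking $f_1=f_2=f$ and $u_1(0)=u_2(0)=u_0$ in \eqref{contraction}, then using the continuity of $t\mapsto\int_\Omega|u_1(t)-u_2(t)|\,dx$ (built into Definition \ref{weaksol}) to upgrade the distributional inequality to $u_1=u_2$ a.e. in $Q$. Where you genuinely add something is the second half: since a solution of \eqref{cmef} is the \emph{pair} $(u,v)$, ``at most one solution'' strictly speaking also requires uniqueness of the potential, and the paper is silent on this point. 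Your completion is sound: once $u_1=u_2$, subtracting the two weak formulations leaves only the diffusion term; the reduction to a.e.\ $t$ via a countable dense family in the separable space $W^{1,p}_D(\Omega)$ is the standard and correct way to justify testing with $v_1(t)-v_2(t)$; and for $p>2$ the inequality $\left(|a|^{p-2}a-|b|^{p-2}b\right)\cdot(a-b)\geq 2^{2-p}|a-b|^p$ makes the monotonicity strict, so $\nabla v_1=\nabla v_2$ a.e., whence $v_1=v_2$ by Poincar\'e's inequality on $W^{1,p}_D(\Omega)$. The one caveat --- which the paper shares, since it invokes the same Poincar\'e inequality in its existence proofs --- is that this last step presupposes that every connected component of $\Omega$ meets $\Gamma_D$; on a component whose boundary lies entirely in $\Gamma_N$, the identity $\nabla(v_1-v_2)=0$ only pins $v_1-v_2$ down to a constant. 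In short: same mechanism as the paper for $u$, plus a needed completion for $v$ that the paper leaves implicit.
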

	
	\section{Existence for the evolution problem}\label{Sexistencep}
	
	The proof of the existence of a solution to \eqref{cmef} will be carried out in the framework of nonlinear semigroup theory in $L^1(\Omega)$. We consider the stationary problem, related to the Euler implicit discretization scheme of the evolution problem \eqref{cmef} 
	
	\smallskip
	
	\begin{equation} 
		\label{st}
		\left\{  
		\begin{array}{lcl}
			\left.
			\begin{array}{l}
				u - \lambda\: \Delta_p v + \lambda\: \nabla \cdot (u  \: V)=f\\
				\\
				u\in \sign^+ (v) 
			\end{array}
			\right\}
			\   & \mathrm{in} & \Omega \\  
			\\
			\ \ v= 0  & \mathrm{on} & \Gamma_D\\ 
			\\
			\ \ \left( \nabla^{p-1} v - u  \: V \right) \cdot \nu = 0  & \mathrm{on} & \Gamma_N,
		\end{array} 
		\right.
	\end{equation}
	where $f \in L^2(\Omega)$ and $\lambda >0$ are given.  
	
	\begin{definition}
		A couple $(u,v) \in  L^\infty(\Omega) \times W^{1,p}_D(\Omega)$ is a weak solution of \eqref{st} if $u \in \signp(v)$, a.e. in $\Omega$, and
		$$
		\int_\Omega u \:\xi \, dx + \lambda\:\int_\Omega \nabla^{p-1} v \cdot \nabla\xi \, dx - \lambda\:\int_\Omega  u \: V\cdot \nabla \xi \, dx= \int_\Omega f\: \xi \, dx ,\quad \forall \, \xi\in W^{1,p}_D(\Omega).
		$$
	\end{definition}		 
	
	\bigskip
	
	As a consequence of  Theorem \ref{compcmef}, we can deduce the following result. 
	
	\begin{corollary}\label{ccontractionst}
		If $(u _1,v_1)$ and $(u _2,v_2)$ are two solutions of \eqref{st} associated with $f_1, f_2 \in L^1(\Omega)$, respectively, then
		$$ 
		\left|u_1-u_2\right|_{1} \leq \left|f_1-f_2\right|_{1}.
		$$
	\end{corollary}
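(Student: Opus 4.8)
The plan is to exploit the fact that the stationary problem \eqref{st} is exactly the resolvent equation of the evolution problem \eqref{cmef}: a solution of \eqref{st}, regarded as constant in time, solves the evolution PDE with a shifted source, so that the contraction of Theorem \ref{compcmef} applies directly and then collapses because the time derivative vanishes.

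First I would check that if $(u_i,v_i)$ solves \eqref{st} with datum $f_i$, then the trivial time-independent extension of $(u_i,v_i)$ to $Q$ is a weak solution of \eqref{cmef} in the sense of Definition \ref{weaksol}, with initial datum $u_0=u_i$ and source
$$\tilde f_i := \frac{f_i - u_i}{\lambda}.$$
The required regularity is immediate: $u_i\in L^\infty(\Omega)$ gives $u_i\in C([0,T),L^1(\Omega))\cap L^\infty(Q)$, while $v_i\in W^{1,p}_D(\Omega)$ gives $v_i\in L^p(0,T;W^{1,p}_D(\Omega))$; the constraints $v_i\ge 0$ and $u_i\in\signp(v_i)$ hold by definition; and $\tilde f_i\in L^1(\Omega)\subset L^1(Q)$ since $\Omega$ and $(0,T)$ are bounded. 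Testing the weak evolution identity with a product $\xi\,\psi$, the term $-\iint_Q u_i\,\xi\,\psi'$ produces precisely $\int_\Omega u_i\,\xi\,\psi(0)$ (using $\psi(T)=0$), which cancels the initial-datum term; dividing the remainder by $\lambda$ yields exactly the weak stationary formulation. Hence the constant-in-time extension is admissible.

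Next I would apply Theorem \ref{compcmef} to these two evolution solutions. Since $\int_\Omega|u_1-u_2|\,dx$ is independent of $t$, its distributional derivative vanishes and the contraction inequality reduces to
$$0 \le \int_\Omega \kappa\,(\tilde f_1-\tilde f_2)\,dx, \qquad \text{in } \D'(0,T),$$
for some $\kappa\in\signp(u_1-u_2)$. Substituting $\tilde f_i=(f_i-u_i)/\lambda$ and multiplying by $\lambda>0$ gives
$$\int_\Omega \kappa\,(u_1-u_2)\,dx \le \int_\Omega \kappa\,(f_1-f_2)\,dx.$$
Since $\kappa\in\signp(u_1-u_2)$, a pointwise inspection of the three regions $\{u_1>u_2\}$, $\{u_1=u_2\}$, $\{u_1<u_2\}$ shows that $\kappa\,(u_1-u_2)=(u_1-u_2)^+$ and $\kappa\,(f_1-f_2)\le (f_1-f_2)^+$ (because $0\le\kappa\le1$), whence
$$\int_\Omega (u_1-u_2)^+\,dx \le \int_\Omega (f_1-f_2)^+\,dx.$$
Exchanging the roles of the indices $1$ and $2$ yields the companion bound for $(u_2-u_1)^+$, and adding the two gives $\left|u_1-u_2\right|_1\le\left|f_1-f_2\right|_1$, as claimed.

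I expect the only genuinely delicate point to be the verification in the first step, namely matching the function spaces and carrying out the initial-datum/source bookkeeping carefully enough that the time extension is a legitimate weak solution of \eqref{cmef} and Theorem \ref{compcmef} truly applies; once this embedding is justified, the collapse of the distributional inequality and the sign bookkeeping with the maximal monotone graph $\signp$ are entirely routine.
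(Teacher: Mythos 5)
Your proposal is correct and is exactly the paper's argument: the paper's proof consists precisely of the observation that a solution of \eqref{st} is a time-independent weak solution of \eqref{cmef} with source $f$ replaced by $f-u$ (you handle the factor $\lambda$ more carefully), after which Theorem \ref{compcmef} collapses to the stated bound. The embedding verification and the sign bookkeeping with $\signp$ that you spell out are exactly the details the paper leaves implicit as a ``simple consequence.''
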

	
	\begin{proof}
		This is a simple consequence of the fact that if the (independent of $t$) couple $(u,v)$ is a weak solution of \eqref{st}, then it can be thought out as a time-independent solution of the evolution problem \eqref{cmef} with $f$ replaced by $f-u$ (which is also independent of $t$).  
		
	\end{proof}
	
	\bigskip We will consider in the sequel $\lambda =1$, the changes being obvious in the general case $\lambda >0$.   For $\eps >0$, let
	$$
	\heps(r) = \left\{ 
	\begin{array}{ccl}   
		1   & \mathrm{if} & r>\eps\\
		r/\eps  & \mathrm{if} & |r| \leq \eps \\
		-1   & \mathrm{if} & r<-\eps 
	\end{array}   
	\right.  
	$$
	
	\smallskip
	
	\noindent and consider the regularized problem
	
	\smallskip
	
	\begin{equation} 
		\label{psteps}
		\left\{  
		\begin{array}{lcl}
			\left.
			\begin{array}{l}
				u_\eps -  \Delta_p v_\eps +   \nabla \cdot (u_\eps  \: V)=f\\
				\\
				u_\eps =\heps  (v_\eps) 
			\end{array}
			\right\}
			\   & \mathrm{in} & \Omega \\  
			\\
			\ \ v_\eps= 0  & \mathrm{on} & \Gamma_D\\ 
			\\
			\ \ \left( \nabla^{p-1} v_\eps - u_\eps  \: V \right) \cdot \nu = 0  & \mathrm{on} & \Gamma_N.
		\end{array} 
		\right.
	\end{equation}
	
	\medskip
	
	\noindent Observe that, for any $\eps>0$, $ \left|\heps\right|\leq 1$, $\heps$ is Lipschitz continuous and it satisfies
	$$
	(I+\heps)^{-1} (r) \longrightarrow (I+\signp)^{-1} (r), \quad \mathrm{as} \ \eps\to 0,  \quad \hbox{for any} \ r\in  \R, 
	$$
	\textit{i.e.}, $\heps$ converges to $\signp$ in the sense of the resolvent, which is equivalent to the convergence in the sense of the graph (cf. \cite{Br}). 
	
	The following result establishes the existence for the regularized problem and, through a passage to the limit, the existence for \eqref{st}.
	
	\begin{proposition}\label{extsst} 
		For any $f\in L^{p'}(\Omega)$ and $\eps>0$, problem \eqref{psteps} has a weak solution $(u_\eps,v_\eps)$, in the sense that  $v_\eps\in W^{1,p}_D(\Omega)$, $ u_\eps=\heps(v_\eps)$, a.e. in $\Omega$, and
		\begin{equation}\label{weakeps}
        \begin{array}{l}
        \int_\Omega u_\eps \:\xi \, dx + \int_\Omega \nabla^{p-1} v_\eps \cdot \nabla\xi \, dx - \int_\Omega  u_\eps \: V\cdot \nabla \xi \, dx  \\  \\ 
			\quad \quad \quad  = \int_\Omega f\: \xi \, dx ,\quad \forall \, \xi\in W^{1,p}_D(\Omega).
		\end{array}\end{equation}
		Moreover, as $\eps\to 0,$ we have 
		\begin{eqnarray}
			\heps (v_\eps) & \longrightarrow  & u \quad \mathit{in} \ L^\infty(\Omega)-\mathit{weak}^\star, \label{convuepss}\\
			v_\eps & \longrightarrow  & v \quad \mathit{in} \  W^{1,p}_D(\Omega)-\mathit{weak} \label{convpepss} 
		\end{eqnarray}
		and  $(u,v)$ is the weak solution of \eqref{st}. Finally, if $f\geq 0 $ then $u,v \geq 0$, a.e. in $\Omega$.
	\end{proposition}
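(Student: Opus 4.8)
The plan is to solve the regularised problem \eqref{psteps} for fixed $\eps>0$ by monotone operator methods, and then to pass to the limit $\eps\to0$ by combining a uniform energy bound with a Minty-type monotonicity argument. For existence, substitute $u_\eps=\heps(v_\eps)$ and view the left-hand side of \eqref{weakeps} as an operator $A=A_0+A_1$ from $W^{1,p}_D(\Omega)$ into its dual $\big(W^{1,p}_D(\Omega)\big)'$, where $\langle A_0v,\xi\rangle=\int_\Omega\nabla^{p-1}v\cdot\nabla\xi$ is the $p$-Laplacian (monotone, hemicontinuous, bounded and coercive) and $\langle A_1v,\xi\rangle=\int_\Omega\heps(v)\,\xi-\int_\Omega\heps(v)\,V\cdot\nabla\xi$ collects the lower-order terms. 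Since $\heps$ is bounded and Lipschitz, $A_1$ is strongly continuous (if $v_n\rightharpoonup v$ in $W^{1,p}_D(\Omega)$ then $v_n\to v$ in $L^p(\Omega)$ by Rellich, so $\heps(v_n)\to\heps(v)$ in every $L^q(\Omega)$, $q<\infty$), whence $A$ is pseudomonotone; and because $\int_\Omega\heps(v)\,v\ge0$ while $\big|\int_\Omega\heps(v)\,V\cdot\nabla v\big|\le\|V\|_{p'}\|\nabla v\|_p$, we get $\langle Av,v\rangle\ge\|\nabla v\|_p^p-\|V\|_{p'}\|\nabla v\|_p$, so $A$ is coercive. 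The surjectivity theorem for coercive pseudomonotone operators then produces $v_\eps$ with $Av_\eps=f$, i.e.\ a solution of \eqref{weakeps}. (Alternatively, freeze $\heps(w)$ in the lower-order terms, solve the resulting pure $p$-Laplacian problem, and apply Schauder's theorem through the compact embedding $W^{1,p}_D(\Omega)\hookrightarrow L^p(\Omega)$.)

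Next I would derive the uniform estimates. Testing \eqref{weakeps} with $\xi=v_\eps$ and using $\heps(v_\eps)\,v_\eps\ge0$ together with Poincaré's inequality (available because $\mathcal L^{N-1}(\Gamma_D)>0$) gives $\|\nabla v_\eps\|_p^p\le C\|\nabla v_\eps\|_p$, hence a bound on $\|v_\eps\|_{W^{1,p}_D(\Omega)}$ independent of $\eps$, while $|u_\eps|\le1$. Along a subsequence, $v_\eps\rightharpoonup v$ in $W^{1,p}_D(\Omega)$ and, by Rellich, $v_\eps\to v$ in $L^p(\Omega)$ and a.e.; $u_\eps\rightharpoonup u$ in $L^\infty(\Omega)$ weak-$\ast$; and $\nabla^{p-1}v_\eps\rightharpoonup\Phi$ in $[L^{p'}(\Omega)]^N$, since $\|\nabla^{p-1}v_\eps\|_{p'}^{p'}=\|\nabla v_\eps\|_p^p$ is bounded. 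From $v_\eps\to v$ a.e.\ and the convergence of the graphs of $\heps$ to the sign graph, I identify $u\in\sign(v)$, meaning $u=1$ on $\{v>0\}$, $u=-1$ on $\{v<0\}$, and $u\in[-1,1]$ on $\{v=0\}$. Passing to the limit in the two linear terms of \eqref{weakeps} yields the limit equation $\int_\Omega u\,\xi+\int_\Omega\Phi\cdot\nabla\xi-\int_\Omega u\,V\cdot\nabla\xi=\int_\Omega f\,\xi$ for all $\xi\in W^{1,p}_D(\Omega)$.

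The main obstacle is to identify $\Phi=\nabla^{p-1}v$, and the delicate point is the drift term $\int_\Omega u_\eps\,V\cdot\nabla v_\eps$, which pairs the weak-$\ast$ convergence of $u_\eps$ with the merely weak convergence of $\nabla v_\eps$. The crucial device is to write $\heps(v_\eps)\,\nabla v_\eps=\nabla H_\eps(v_\eps)$, with $H_\eps(r):=\int_0^r\heps$, and integrate by parts: because $H_\eps(v_\eps)=0$ on $\Gamma_D$ and $V\cdot\nu=0$ on $\Gamma_N$ (assumption \eqref{HypV0}, made precise in \eqref{HypVstg}), the boundary contribution vanishes and $\int_\Omega u_\eps\,V\cdot\nabla v_\eps=-\int_\Omega(\nabla\cdot V)\,H_\eps(v_\eps)$. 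As $H_\eps\to|\cdot|$ uniformly and $v_\eps\to v$ in $L^1(\Omega)$, the right-hand side tends to $-\int_\Omega(\nabla\cdot V)\,|v|$. Testing \eqref{weakeps} with $v_\eps$ and inserting this, together with $\int_\Omega f\,v_\eps\to\int_\Omega f\,v$ and $\int_\Omega u_\eps\,v_\eps\to\int_\Omega u\,v$, gives $\lim_\eps\int_\Omega|\nabla v_\eps|^p=\int_\Omega f\,v-\int_\Omega u\,v-\int_\Omega(\nabla\cdot V)\,|v|$. On the other hand $u\in\sign(v)$ forces $u\,\nabla v=\nabla|v|$ a.e., so $\int_\Omega u\,V\cdot\nabla v=-\int_\Omega(\nabla\cdot V)\,|v|$; comparing with the limit equation tested against $\xi=v$ identifies the previous right-hand side as $\int_\Omega\Phi\cdot\nabla v$. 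The standard Minty argument now applies: $0\le\lim_\eps\int_\Omega\big(\nabla^{p-1}v_\eps-\nabla^{p-1}\phi\big)\cdot(\nabla v_\eps-\nabla\phi)$ with $\phi=v-t\psi$, divided by $t>0$ and letting $t\to0$, gives $\int_\Omega\big(\Phi-\nabla^{p-1}v\big)\cdot\nabla\psi=0$ for every $\psi\in W^{1,p}_D(\Omega)$, so $\Phi$ may be replaced by $\nabla^{p-1}v$ in the limit equation. Hence $(u,v)$ is a weak solution of \eqref{st}.

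Finally, suppose $f\ge0$. The couple $(0,0)$ is a subsolution of \eqref{psteps}, and since the zeroth-order nonlinearity $\heps$ is nondecreasing and the drift can again be written as an exact gradient, the comparison principle for \eqref{psteps}—obtained by the doubling-of-variables technique, exactly as in Section \ref{Scontraction}, which is what makes the first-order term tractable—yields $v_\eps\ge0$ and therefore $u_\eps=\heps(v_\eps)\ge0$; a formal test with $-(v_\eps)_-$ reveals the same mechanism. In the limit $v\ge0$ and $u\ge0$, so on $\{v<0\}$ the inclusion $u\in\sign(v)$ cannot hold and that set is null, while on $\{v=0\}$ one has $u\in[0,1]$; thus $u\in\signp(v)$ a.e.\ in $\Omega$. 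Because \eqref{st} has a unique solution (Corollary \ref{Cuniq}), the convergences hold for the whole family, not merely along subsequences.
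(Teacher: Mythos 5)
Your existence argument for \eqref{psteps} (monotone $p$-Laplacian part plus a strongly continuous lower-order part, coercivity via Young's inequality) and your uniform bound from testing with $v_\eps$ are essentially the paper's proof. Your passage to the limit $\eps\to 0$ is in fact \emph{more} detailed than the paper's, which compresses the whole identification into ``a monotonicity argument (see \cite{Br})'': your chain --- energy convergence obtained by rewriting the drift as $\int_\Omega u_\eps V\cdot\nabla v_\eps=-\int_\Omega(\nabla\cdot V)H_\eps(v_\eps)$, the identity $u\,\nabla v=\nabla|v|$ for the limit, and then Minty's trick to replace $\Phi$ by $\nabla^{p-1}v$ --- is a legitimate and welcome way of filling in that step.

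The genuine gap is the nonnegativity step, which is exactly the part the paper proves by hand. The comparison principle you invoke for \eqref{psteps} is not available: Section \ref{Scontraction} and Corollary \ref{ccontractionst} concern the problems governed by the graph $\signp$ (i.e.\ \eqref{cmef} and \eqref{st}), not the $\heps$-regularized problem at fixed $\eps$, and their proofs rest on the renormalization lemma and Kato inequality established for that structure; there is no result in the paper you can quote at this point, and redoing the doubling argument for \eqref{psteps} would be a separate, nontrivial piece of work. Your fallback, ``a formal test with $-(v_\eps)_-$ reveals the same mechanism,'' does not close either. Testing \eqref{weakeps} with $\xi=v_\eps^-$ and using your own primitive trick gives
\begin{equation*}
\int_\Omega u_\eps^-\,v_\eps^- + \int_\Omega \left|\nabla v_\eps^-\right|^p \;\leq\; \int_\Omega \heps(v_\eps^-)\,V\cdot\nabla v_\eps^- \;=\; -\int_\Omega (\nabla\cdot V)\,H_\eps(v_\eps^-),
\end{equation*}
and the right-hand side has no sign. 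Since $H_\eps(v_\eps^-)\leq \heps(v_\eps^-)\,v_\eps^-=u_\eps^-v_\eps^-$, the best absorption available yields $\left(1-\norm{\nabla\cdot V}_{L^\infty}\right)\int_\Omega u_\eps^-v_\eps^-+\int_\Omega|\nabla v_\eps^-|^p\leq 0$, which concludes only if $\norm{\nabla\cdot V}_{L^\infty}\leq 1$; the gradient term cannot help, because it controls a $p$-th power of $v_\eps^-$ while the bad term is linear in $v_\eps^-$. This is precisely why the paper tests instead with $\mathcal{H}_\sigma(v_\eps^-)$ and sends $\sigma\to 0$ at fixed $\eps$: the drift contribution then localizes to $\frac1\sigma\int_{[0<v_\eps^-<\sigma]}\heps(v_\eps^-)\,V\cdot\nabla v_\eps^-$, which is bounded by $\frac1\eps\int_{[0<v_\eps^-<\sigma]}|V|\,|\nabla v_\eps^-|\to 0$ thanks to the Lipschitz bound $\heps(r)\leq r/\eps$, and one obtains $\int_\Omega u_\eps^-\leq 0$ directly, with no unsigned $\nabla\cdot V$ term ever appearing. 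You need this device (or an honest comparison proof for \eqref{psteps}) to conclude $u_\eps,v_\eps\geq 0$, and hence to upgrade your limit inclusion $u\in\sign(v)$ to the required $u\in\signp(v)$. A minor correction in the same spirit: uniqueness for the stationary problem \eqref{st}, which you use to get convergence of the whole family, is Corollary \ref{ccontractionst}, not Corollary \ref{Cuniq} (the latter concerns the evolution problem).
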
 
	
	\begin{proof}   
		The existence of a solution for \eqref{psteps} is standard, but for completeness and the reader's convenience, we reproduce the main arguments. 
		
		Let us denote the topological dual space of $W^{1,p}_D(\Omega)$ by $\left[ W^{1,p}_D(\Omega) \right]^\star$ and the associated duality bracket by $ \langle \cdot , \cdot \rangle$. Observe that the operator 
		$$A_\eps \, : \, W^{1,p}_D(\Omega) \longrightarrow  \left[ W^{1,p}_D(\Omega) \right]^\star,$$ 
		defined, for $\xi\in W^{1,p}_D(\Omega)$, by
		$$ 
		\langle A_\eps v,\xi \rangle =  \int_{\Omega} \heps (v)\, \xi\, dx + \int_{\Omega} \nabla^{p-1} v \cdot \nabla \xi \, dx -\int_{\Omega} \heps (v) \, V\cdot \nabla \xi \, dx, 
		$$
		is bounded and weakly continuous. Moreover, $A_\eps$ is coercive since, for any $u\in W^{1,p}_D(\Omega),$ we have
		\begin{eqnarray*}
			\langle A_\eps v,v \rangle & = & \int_{\Omega} \heps (v)\, v\, dx + \int_{\Omega} \left| \nabla v \right|^p dx -\int_{\Omega} \heps (v) \, V\cdot \nabla v \, dx,\\ 
			& \geq & \int_{\Omega} \left| \nabla v \right|^p dx - \int_{\Omega} \left| V \right|  \left|  \nabla v \right| dx \\
			& \geq &  \frac{1}{p^\prime}  \int_{\Omega} \left| \nabla v \right|^p dx - \frac{1}{p^\prime} \int_{\Omega} \left|  V\right|^{p^\prime}   \: dx,
		\end{eqnarray*}
		using Young's inequality. Thus, for any $f\in \left[ W^{1,p}_D(\Omega) \right]^\star \supset L^{p^\prime} (\Omega)$, the problem $A_\eps v=f$ has a solution $v_\eps \in W^{1,p}_D(\Omega).$  
		
		To pass to the limit as $\eps\to 0$, we first note that 
		\begin{equation} \label{hestimate0}
			\int_{\Omega} \left| \nabla v_\eps \right|^p dx  \leq  C (N,p,\Omega) \left( \int_{\Omega} \left| V  \right|^{p^\prime} dx  + \int_{\Omega} \left| f  \right|^{p^\prime} dx \right).
		\end{equation}
		Indeed, taking $ v_\eps$ as a test function, we have
		$$\int_{\Omega} u_\eps  v_\eps \, dx + \int_{\Omega} \left| \nabla  v_\eps \right|^p dx =  \int_{\Omega} u_\eps\, V\cdot \nabla  v_\eps  \, dx + \int_{\Omega} f  v_\eps \, dx.$$
		Using Young's inequality and the fact that $\left| u_\eps \right| =\left| \heps(v_\eps) \right| \leq 1$, we obtain
		$$	\int_\Omega u_\eps\, V\cdot \nabla  v_\eps \, dx \leq \frac{1}{p^{\prime}}  \int_\Omega \left| V\right|^{p^{\prime}} + \frac{1}{p} \int_\Omega \left| \nabla v_\eps \right|^{p} dx  $$
		and, by combining Poincar\'e's with Young's inequalities, also
		$$\int_\Omega f v_\eps \, dx \leq  \frac{C}{p^{\prime}} \int_\Omega \left| f\right|^{p^{\prime}}  + \frac{1}{p} \int_\Omega \left| \nabla v_\eps \right|^{p} dx.$$
		Using the fact that $u_\eps v_\eps\geq 0$, we deduce \eqref{hestimate0}.  
		
		Now, it is clear that the  sequences $v_\eps$ and $u_\eps=\heps (v_\eps)$ are bounded, respectively, in $W^{1,p}_D(\Omega)$ and in $L^\infty(\Omega)$. Thus, there exists a subsequence (that we denote again by $v_\eps$) such that \eqref{convuepss} and \eqref{convpepss} are fulfilled. In particular, using a monotonicity argument (see, for instance, \cite{Br}), this implies that $u\in \signp(v)$, a.e. in $\Omega$, and, letting $\eps\to 0$ in \eqref{weakeps}, we obtain that $(u,v)$ is a weak solution of \eqref{st}. 
        
For the last part of the proposition, it is enough to prove that, for each $\eps >0,$  $u_\eps\geq 0$ and $v_\eps\geq 0$, a.e. in $\Omega$. With this in mind, we consider $\xi=\mathcal{H}_\sigma(v_\eps^-)$, for an arbitrary $\sigma>0$, as a test function in \eqref{weakeps}, obtaining
\begin{eqnarray*}
    \int_\Omega u_\eps \:\mathcal{H}_\sigma(v_\eps^-) + \int_\Omega \nabla^{p-1} v_\eps \cdot \nabla\mathcal{H}_\sigma(v_\eps^-) - \int_\Omega  u_\eps \: V\cdot \nabla \mathcal{H}_\sigma (v_\eps^-)   \\ 
	= \int_\Omega f\: \mathcal{H}_\sigma(v_\eps^-).
\end{eqnarray*}
This implies  
\begin{eqnarray*}
    -\int_\Omega u_\eps^- \:\mathcal{H}_\sigma(v_\eps^-)  - \int_\Omega \vert \nabla v_\eps^- \vert^p \:  \mathcal{H}_\sigma'(v_\eps^-) + \int_\Omega  u_\eps^-  \: V\cdot \nabla \mathcal{H}_\sigma (v_\eps^-)    \\ 
	= \int_\Omega f\: \mathcal{H}_\sigma(v_\eps^-),
\end{eqnarray*}
and so 
\begin{equation}  \label{interm1}
         \int_\Omega u_\eps^- \:\mathcal{H}_\sigma(v_\eps^-)  -  \int_\Omega  u_\eps^-  \: V\cdot \nabla \mathcal{H}_\sigma (v_\eps^-)    \leq - \int_\Omega f\: \mathcal{H}_\sigma(v_\eps^-)  \leq 0 ,
\end{equation}  
where we used the fact that $f\geq 0.$  Observe that 
\begin{eqnarray*}
            \int_\Omega  u_\eps^-  \: V\cdot \nabla \mathcal{H}_\sigma (v_\eps^-)  &=& \int_\Omega  \heps(v_\eps^-)  \: V\cdot \nabla \mathcal{H}_\sigma (v_\eps^-)  \\
            &=& \int_\Omega    \mathcal{H}_\sigma' (v_\eps^-) \:    \heps(v_\eps^-)  \: V\cdot \nabla   v_\eps^- \\
            &=& \frac{1}{\sigma}\int_{[v_\eps^- <\sigma]}        \heps(v_\eps^-)  \: V\cdot \nabla   v_\eps^-  \ \longrightarrow 0, \hbox{ as }\sigma\to 0, 
\end{eqnarray*}
where we used the fact that $r\to \heps(r)$ is  Lipchitz continuous.  So, letting $\sigma\to 0$ in \eqref{interm1}, we obtain 
$$\int_\Omega u_\eps^-   \leq 0,$$
which implies that $u_\eps^- =0$, a.e. in $\Omega$, and then also $v_\eps^-   =0$, a.e. in $\Omega$. Thus, $u_\eps$ and $v_\eps$ are nonnegative a.e. in $\Omega.$
     
\end{proof}
	
	To prove the existence of a weak solution to \eqref{cmef}, we fix $f\in L^{p'}(Q)$ and, for an arbitrary $0<\eps\leq \eps_0$ and $n\in \N$ such that $n\eps=T$, we consider the sequence $(u_i,v_i)$ given by the $\eps-$Euler implicit scheme associated with \eqref{cmef}, namely
	\begin{equation} 
		\label{sti}
		\left\{  
		\begin{array}{lcl}
			\left.
			\begin{array}{l}
				u_{i+1} - \eps \Delta_p v_{i+1} + \eps \, \nabla \cdot (u_{i+1} V) = u_{i} +\eps f_i\\
				\\
				u_{i+1} \in \sign^+(v_{i+1}) 
			\end{array}
			\right\}
			\   & \mathrm{in} & \Omega \\  
			\\
			\ \ v_{i+1} = 0  & \mathrm{on} & \Gamma_D\\ 
			\\
			\ \ \left( \nabla^{p-1} v_{i+1} - u_{i+1}  \: V \right) \cdot \nu = 0  & \mathrm{on} & \Gamma_N.
		\end{array} 
		\right.
	\end{equation}
	
	\smallskip
	
	\noindent where, for each $i=0,\ldots, n-1$, $f_i$ is given by
	$$f_i = \frac{1}{\eps} \int_{i\eps}^{(i+1)\eps} f(s)\, ds, \quad \hbox{a.e. in } \Omega. $$
	Now, for a given $\eps-$time discretization $0=t_0<t_1<\ldots<t_n=T$, satisfying $t_{i+1}-t_i = \eps$, we define the $\eps-$approximate solution by
	$$u_\eps:= \sum_{i=0}^{n-1 } u_i \, \chi_{[t_i,t_{i+1})} \qquad \hbox{and} \qquad v_\eps:= \sum_{i=1}^{n -1} v_i \, \chi_{[t_i,t_{i+1})}.$$
	
	Due to Proposition \ref{extsst} and the general theory of evolution problems governed by accretive operators (see, for instance, \cite{Benilan, Barbu}), we define the operator $\A$ in $L^1(\Omega)$ by $\mu\in \A(z)$ if, and only if, $\mu, z\in L^1(\Omega)$ and $z$ is a solution of the problem 
	$$
	\left\{  
	\begin{array}{lcl}
		\left.
		\begin{array}{l}
			- \Delta_p v +  \nabla \cdot (z  \: V)=\mu\\
			\\
			z \in \sign^+(v) 
		\end{array}
		\right\}
		\   & \mathrm{in} & \Omega \\  
		\\
		\ \ v = 0  & \mathrm{on} & \Gamma_D\\ 
		\\
		\ \ \left( \nabla^{p-1} v - z  \: V \right) \cdot \nu = 0  & \mathrm{on} & \Gamma_N.
	\end{array} 
	\right.
	$$
	
	\smallskip
	
	\noindent in the sense that $z\in L^\infty(\Omega)$ and there exists $v \in  W^{1,p}_D(\Omega)$ satisfying $z \in \signp(v)$, a.e. in $\Omega$, and 
	$$\int_\Omega \nabla^{p-1} v \cdot \nabla\xi \, dx - \int_\Omega  z \:  V\cdot \nabla \xi \, dx = \int_\Omega \mu \: \xi \, dx, \quad \forall \:  \xi \in W^{1,p}_D(\Omega).$$
	
	As a consequence of Corollary \ref{ccontractionst}, we know that the operator $\A$ is accretive in $L^1(\Omega)$. Moreover, we have 
	$$\overline{\D(A)}= \left\{ u\in L^\infty(\Omega)\: : \:  \left| u \right| \leq 1, \hbox{ a.e. in } \Omega \right\}.$$ 
	It then follows from the general theory of nonlinear semigroups governed by accretive operators (see, for instance, \cite{Barbu}) that, as $\eps\to 0$, 
	\begin{equation}\label{convueps}
		u_\eps \longrightarrow u, \quad \hbox{ in } \C \left( [0,T), L^1(\Omega) \right),
	\end{equation}
	and $u$ is the so-called \textit{mild solution} of the evolution problem
	\begin{equation}\label{cauchy}
		\left\{\begin{array}{ll}
			u_t + \A u \ni f & \hbox{in }(0,T)\\  
			\\
			u(0)=u_0.
		\end{array}  \right.
	\end{equation}
	
	To complete the proof of the existence for problem \eqref{cmef}, we show that the mild solution $u$ is, in fact, the solution of \eqref{cmef}. More precisely, we prove the following result. 
	
	\begin{theorem}\label{existcmef}
		For any non-negative $ f\in L^{p^\prime}(Q)$ and $u_0\in L^\infty(\Omega)$, the mild solution of \eqref{cauchy} is a  solution of \eqref{cmef}, in the sense that there exists $v\in L^p(0,T;W^{1,p}_D(\Omega)$ such that the couple $(u,v)$ solves the problem \eqref{cmef} in the sense of Theorem \ref{Theorem1}. Moreover, we have 
			\begin{equation}\label{newestimate}
				\iint_Q \left|\nabla v \right|^p   \leq   \iint_Q  f\: v + \iint_Q V\cdot \nabla v.
			\end{equation}
	\end{theorem}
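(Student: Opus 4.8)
The goal is to produce the missing potential $v$ as a weak limit of the interpolants $v_\eps$ and to verify that $(u,v)$, with $u$ the mild solution from \eqref{convueps}, fulfils Definition \ref{weaksol}; the estimate \eqref{newestimate} should then fall out of the limiting procedure. The engine of the argument is a discrete energy inequality obtained by testing each step of the Euler scheme \eqref{sti} with the admissible function $v_{i+1}\in W^{1,p}_D(\Omega)$.

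First I would record the a priori bounds. By Proposition \ref{extsst} every iterate satisfies $0\le u_i\le 1$ (the upper bound being encoded in $u_i\in\signp(v_i)$), so $u_\eps$ is bounded in $L^\infty(Q)$. Testing \eqref{sti} with $v_{i+1}$ gives
\begin{equation*}
\int_\Omega (u_{i+1}-u_i)\,v_{i+1}\,dx + \eps\int_\Omega|\nabla v_{i+1}|^p\,dx = \eps\int_\Omega u_{i+1}\,V\cdot\nabla v_{i+1}\,dx + \eps\int_\Omega f_i\,v_{i+1}\,dx.
\end{equation*}
The decisive structural remark is that, for $v_{i+1}\ge 0$, the relation $u_{i+1}\in\signp(v_{i+1})$ is equivalent to $v_{i+1}\in\partial I_{[0,1]}(u_{i+1})$, where $I_{[0,1]}$ denotes the indicator function of $[0,1]$; since $u_i\in[0,1]$, convexity forces $\int_\Omega(u_{i+1}-u_i)\,v_{i+1}\ge 0$. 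Discarding this nonnegative term, absorbing the drift contribution by Young's inequality, and summing over $i$ yields a uniform bound for $v_\eps$ in $L^p(0,T;W^{1,p}_D(\Omega))$, analogous to \eqref{hestimate0}, together with the discrete form of \eqref{newestimate}.

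Next I would extract subsequences so that $u_\eps\to u$ strongly in $C([0,T);L^1(\Omega))$ by \eqref{convueps}—hence in every $L^q(Q)$, $q<\infty$, since $0\le u_\eps\le 1$—while $v_\eps\rightharpoonup v$ weakly in $L^p(0,T;W^{1,p}_D(\Omega))$ and $\nabla^{p-1}v_\eps\rightharpoonup\chi$ weakly in $L^{p'}(Q)$. The constraint is inherited painlessly: from $u_\eps v_\eps=v_\eps$ (because $u_\eps=1$ on $\{v_\eps>0\}$ and $v_\eps=0$ elsewhere) and the strong-weak pairing of $u_\eps\to u$ in $L^{p'}$ with $v_\eps\rightharpoonup v$ in $L^p$, one gets $\iint_Q(u-1)\,v=0$; as $v\ge 0$ and $u\le 1$, this forces $v(u-1)=0$, i.e. $u\in\signp(v)$ a.e. Passing to the limit in the time-discrete weak formulation is then standard in every term but the diffusion one: the drift term converges because $u_\eps V\to uV$ strongly in $L^{p'}(Q)$ (dominated convergence, $|u_\eps|\le 1$, $V\in L^{p'}$) against $\nabla v_\eps\rightharpoonup\nabla v$, while the source and the discrete time-derivative terms converge by the usual Euler-scheme manipulations. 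One thus reaches the identity of Definition \ref{weaksol} with $\chi$ in place of $\nabla^{p-1}v$.

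The heart of the matter—and the step I expect to be hardest—is the identification $\chi=|\nabla v|^{p-2}\nabla v$, which weak convergence alone cannot provide and which I would settle by the Minty–Browder monotonicity method. The pivotal ingredient is $\limsup_\eps\iint_Q|\nabla v_\eps|^p\le\iint_Q\chi\cdot\nabla v$. Its left-hand side is controlled by letting $\eps\to0$ in the summed energy estimate: using that $\nabla v=0$ a.e. on $\{v=0\}$ and $u=1$ on $\{v>0\}$, whence $\iint_Q uV\cdot\nabla v=\iint_Q V\cdot\nabla v$, I obtain $\limsup_\eps\iint_Q|\nabla v_\eps|^p\le\iint_Q V\cdot\nabla v+\iint_Q f\,v$; combined with weak lower semicontinuity of $w\mapsto\iint_Q|\nabla w|^p$ this already proves \eqref{newestimate}. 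To match this bound with $\iint_Q\chi\cdot\nabla v$ I would test the limit equation with $v$ itself—justified by a Steklov-averaging argument, as $v$ is not of separated form—the delicate point being that the time-derivative contribution vanishes: since $v\in\partial I_{[0,1]}(u)$, the generalized chain rule gives the entropy identity $\iint_Q v\,\partial_t u=0$, and this is precisely where $\partial_t u$ being only a distribution for the mild solution makes the justification technical. Granting the resulting Minty inequality, the monotonicity relation $\iint_Q(\nabla^{p-1}v_\eps-\nabla^{p-1}w)\cdot(\nabla v_\eps-\nabla w)\ge 0$ passes to the limit for every $w\in L^p(0,T;W^{1,p}_D(\Omega))$, and the choice $w=v\pm t\,\phi$ with $t\to0$ gives $\chi=\nabla^{p-1}v$, completing the proof that $(u,v)$ is a weak solution.
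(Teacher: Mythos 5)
Your proposal is correct and follows essentially the same path as the paper's own proof (Lemma \ref{lmildweak}): testing the Euler scheme \eqref{sti} with $v_{i+1}$, using $(u_{i+1}-u_i)\,v_{i+1}\geq 0$ and $u_{i+1}\nabla v_{i+1}=\nabla v_{i+1}$ to obtain the discrete form of \eqref{newestimate} and the uniform bound in $L^p(0,T;W^{1,p}_D(\Omega))$, then combining the semigroup convergence \eqref{convueps} with the weak convergence of $v_\eps$ to inherit the constraint and pass to the limit in the weak formulation. The only divergence is that you carry out explicitly the Minty--Browder identification of the weak limit of $\nabla^{p-1}v_\eps$ (resting on the chain-rule identity $\iint_Q v\,\partial_t u=0$, which you rightly flag as the technical crux), a step the paper absorbs without comment into ``letting $\eps\to 0$''; your write-up is, if anything, more complete on this one genuinely delicate point.
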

	
	To this aim, we use the limit of the sequence $v_\eps$, given by the $\eps-$approxima\-te solution. 
	
	\begin{lemma}\label{lmildweak}
		We have, as $\eps\to 0$, 
		$$v_\eps \longrightarrow v,\quad \hbox{in } L^p \left( 0,T; W^{1,p}_D(\Omega) \right),$$
	$v$ satisfies \eqref{newestimate}  	and $(u,v)$ is a weak solution of \eqref{cmef}.   
	\end{lemma}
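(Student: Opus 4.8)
The plan is to pass to the limit $\eps\to 0$ in the Euler-implicit scheme \eqref{sti}, upgrading the semigroup convergence \eqref{convueps} of $u_\eps$ to a statement about the potentials $v_\eps$. First I would derive a uniform energy estimate. Testing each step of \eqref{sti} with $v_{i+1}$ and exploiting two pointwise facts — namely $u_{i+1}\,v_{i+1}=v_{i+1}$ and $u_{i+1}\,\nabla v_{i+1}=\nabla v_{i+1}$ a.e. (both because $u_{i+1}=1$ on $\{v_{i+1}>0\}$, while $\nabla v_{i+1}=0$ a.e.\ on $\{v_{i+1}=0\}$ since $v_{i+1}\geq 0$), together with $u_i\,v_{i+1}\leq v_{i+1}$ — yields
$$\eps\int_\Omega|\nabla v_{i+1}|^p \leq \eps\int_\Omega V\cdot\nabla v_{i+1} + \eps\int_\Omega f_i\,v_{i+1} + \int_\Omega (u_i-u_{i+1})\,v_{i+1},$$
whose last term is nonpositive. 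Summing over $i$ and rewriting in terms of the $\eps$-approximate solutions gives
$$\iint_Q|\nabla v_\eps|^p \leq \iint_Q V\cdot\nabla v_\eps + \iint_Q f_\eps\,v_\eps,$$
so that, by Young's and Poincaré's inequalities, $v_\eps$ is bounded in $L^p\big(0,T;W^{1,p}_D(\Omega)\big)$ and $\nabla^{p-1}v_\eps$ in $L^{p'}(Q)^N$. Passing to subsequences, $v_\eps\rightharpoonup v$ weakly in $L^p\big(0,T;W^{1,p}_D(\Omega)\big)$ and $\nabla^{p-1}v_\eps\rightharpoonup\chi$ weakly in $L^{p'}(Q)^N$; since $V\in L^{p'}$ and $f_\eps\to f$ strongly in $L^{p'}(Q)$, the right-hand side converges to $\iint_Q V\cdot\nabla v+\iint_Q f\,v$, and weak lower semicontinuity of $\int|\nabla\cdot|^p$ already delivers \eqref{newestimate}.

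Next I would pass to the limit in the weak formulation. From \eqref{convueps} we have $u_\eps\to u$ in $\C([0,T),L^1(\Omega))$ with $0\leq u\leq1$, hence $u_\eps\to u$ in $L^q(Q)$ for every $q<\infty$; combined with $v_\eps\rightharpoonup v$ this gives $\iint_Q u_\eps v_\eps\to\iint_Q u\,v$, and the maximal monotonicity of $\signp$ (with $\heps\to\signp$ in the resolvent sense, as recorded in Proposition \ref{extsst}) yields $v\geq0$ and $u\in\signp(v)$ a.e.\ in $Q$. Writing \eqref{sti} in its time-continuous weak form and letting $\eps\to0$, using $u_\eps V\to uV$ in $L^{p'}$ and $\nabla^{p-1}v_\eps\rightharpoonup\chi$, the pair $(u,v)$ satisfies the identity of Definition \ref{weaksol} with $\nabla^{p-1}v$ replaced by the as-yet-unidentified field $\chi$.

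The crux is to prove $\chi=\nabla^{p-1}v$, which I would do by Minty's monotonicity trick. Expanding $\iint_Q\big(\nabla^{p-1}v_\eps-\nabla^{p-1}w\big)\cdot\big(\nabla v_\eps-\nabla w\big)\geq0$ for an arbitrary $w\in L^p\big(0,T;W^{1,p}_D(\Omega)\big)$ and taking $\limsup$, the nonlinear term is controlled by the energy inequality above, $\limsup_\eps\iint_Q|\nabla v_\eps|^p\leq\iint_Q V\cdot\nabla v+\iint_Q f\,v$. To close the argument one needs the matching limit energy identity $\iint_Q\chi\cdot\nabla v=\iint_Q V\cdot\nabla v+\iint_Q f\,v$; pairing the limit equation with $v$ (legitimate, since $\partial_t u\in L^{p'}\big(0,T;(W^{1,p}_D)^{*}\big)$ by the equation itself and $v\in L^p\big(0,T;W^{1,p}_D\big)$) and using $uV\cdot\nabla v=V\cdot\nabla v$, this reduces to showing $\iint_Q\partial_t u\,v=0$.

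This last identity is the main obstacle. It expresses that, since $v\in\partial I_{[0,1]}(u)$ pointwise with $u\in[0,1]$, the convex chain rule should give $\iint_Q\partial_t u\,v=\int_\Omega I_{[0,1]}(u(T))-\int_\Omega I_{[0,1]}(u(0))=0$; justifying this rigorously requires the Brezis–Komura / Alt–Luckhaus machinery establishing the absolute continuity of $t\mapsto\int_\Omega I_{[0,1]}(u(t))$ and the corresponding integration-by-parts in time. With the identity in hand, the $\limsup$ estimate turns into $\iint_Q\big(\chi-\nabla^{p-1}w\big)\cdot\big(\nabla v-\nabla w\big)\geq0$ for all $w$; choosing $w=v-t\phi$ and letting $t\to0$ forces $\chi=\nabla^{p-1}v$, so that $(u,v)$ is a weak solution. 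Finally, this same identification gives $\iint_Q|\nabla v_\eps|^p\to\iint_Q|\nabla v|^p$, which, together with the weak convergence and the uniform convexity of $L^p$, upgrades $v_\eps\rightharpoonup v$ to the claimed strong convergence in $L^p\big(0,T;W^{1,p}_D(\Omega)\big)$.
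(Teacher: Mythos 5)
Your proposal is correct, and its first half coincides with the paper's argument: the paper also tests the scheme \eqref{sti} with $v_{i+1}$, uses $(u_{i+1}-u_i)\,v_{i+1}\geq 0$ and $u_{i+1}\nabla v_{i+1}=\nabla v_{i+1}$ to obtain the space-time energy inequality $\iint_Q |\nabla v_\eps|^p \leq \iint_Q (f_\eps v_\eps + V\cdot\nabla v_\eps)$, extracts a weak limit $v$ of $v_\eps$ in $L^p(0,T;W^{1,p}_D(\Omega))$, and gets $u\in\signp(v)$ by combining \eqref{convueps} with the graph convergence. Where you genuinely diverge is at the crux: the paper, having established only \emph{weak} convergence of $v_\eps$, writes the interpolated formulation with $\tilde u_\eps$ and simply asserts that letting $\eps\to 0$ yields a weak solution; it never explains how the weak $L^{p'}$-limit of the nonlinear flux $\nabla^{p-1}v_\eps$ is identified with $\nabla^{p-1}v$, nor does its proof deliver the strong convergence that the lemma statement claims. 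Your Minty argument supplies exactly this missing identification, resting on the energy identity obtained by pairing the limit equation with $v$ and on the chain-rule identity $\iint_Q \partial_t u\, v=0$, which is legitimate because $v$ lies in the subdifferential of the indicator function of $[0,1]$ at $u$ and that indicator vanishes identically along the trajectory; this is rigorous via Steklov averaging and the two one-sided convexity inequalities, using $\partial_t u\in L^{p'}(0,T;(W^{1,p}_D(\Omega))^*)$ read off from the limit equation (the same identity the paper itself invokes only formally in Section \ref{aamr}). Two remarks on your write-up: Minty gives $\iint_Q(\chi-\nabla^{p-1}v)\cdot\nabla\phi=0$ for all admissible $\phi$, i.e.\ equality of the fluxes tested against gradients rather than pointwise equality, but that is all Definition \ref{weaksol} requires; and your final step (norm convergence of $\iint_Q|\nabla v_\eps|^p$ plus uniform convexity) is what actually proves the strong convergence asserted in the lemma, so your proof establishes the statement in full, whereas the paper's own proof stops at weak convergence and leaves the flux identification implicit. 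The price you pay is the technical chain-rule step, which you correctly single out as the main obstacle and which must be handled carefully near $t=0$ using the $C([0,T);L^1(\Omega))$ regularity of $u$.
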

	
\begin{proof}
Due to Proposition \ref{extsst}, the  sequence $(u_i,v_i)$ given by \eqref{sti} is well defined in $L^\infty(\Omega)\times W^{1,p}_D(\Omega)$, $v_i \geq 0$, $u_i\in \signp(v_i)$, and 
\begin{equation}\label{stwsi}
\begin{array}{l} 		
\int_\Omega u_{i+1} \:\xi \, dx + \eps \int_\Omega \nabla^{p-1} v_{i+1} \cdot  \nabla\xi \, dx - \eps \int_\Omega u_{i+1} \:  V\cdot \nabla \xi \, dx   \\  \\ 
\quad \quad \quad  = \int_\Omega u_i \: \xi \, dx + \eps \int_\Omega f_i \: \xi \, dx ,\quad \forall \, \xi\in W^{1,p}_D(\Omega).
\end{array}
\end{equation}
Taking $v_{i+1}$ as a test function in \eqref{stwsi}, and using the fact that 
$$\left( u_{i+1}-u_i \right) v_{i+1} \geq 0$$
and $u_{i+1}\nabla v_{i+1} = \nabla v_{i+1}$, we get 
\begin{equation} 
	\eps\: \int_\Omega  \left|\nabla v_{i+1}  \right|^p   \leq  	\eps\: \int_\Omega   \left(  f_i\: v_{i+1}  +    V  \cdot \nabla v_{i+1}\right)  ,
\end{equation}		
which  implies that  
\begin{equation} 
	\iint_Q \left|\nabla v_\eps \right|^p   \leq   \iint_Q\left(  f_\eps\: v_{\eps}  +    V  \cdot \nabla v_{\eps}\right) ,
\end{equation}
where 
$$f_\eps =\sum_{i=0}^{n-1} f_{i} \: \chi_{[t_i,t_{i+1})},\quad \hbox{in }\Omega.$$  

Reasoning as in the proof of \eqref{hestimate0}, we get  	
$$\int_{\Omega} \left| \nabla v_\eps \right|^p dx  \leq  C (N,p,\Omega) \left( \int_{\Omega} \left| V  \right|^{p^\prime} dx  + \int_{\Omega} \left| f_\eps  \right|^{p^\prime} dx \right).$$
This implies that $0\leq v_\eps$ is bounded in $L^p \left( 0,T;W^{1,p}_D(\Omega) \right)$ and that there exists $0\leq v\in L^p \left( 0,T;W^{1,p}_D(\Omega) \right)$ such that, taking a subsequence if necessary, 
$$v_\eps \longrightarrow v,\quad \hbox{in } L^p \left( 0,T;W^{1,p}_D(\Omega) \right)-\hbox{weak}.$$
Combining this with \eqref{convueps} and the fact that $u_\eps\in \sign^+ (\eps)$, we deduce moreover that $u \in \sign^+ (v)$, a.e. in $Q$.  Now, as usual in nonlinear semigroup theory for evolution problems, we consider   
$$\tilde u_\eps = \sum_{i=0}^{n-1} \frac{(t-t_i)u_{i+1} - (t-t_{i+1}) u_i}{\eps } \: \chi_{[t_i,t_{i+1})},$$
which converges to $u$ as well in $\C \left( [0,T);L^1(\Omega) \right)$. For any test function $\xi \in W^{1,p}_D(\Omega)$, we have
$$ \frac{d}{dt} \int_\Omega \tilde u_\eps \: \xi \, dx + \int_\Omega \left( \nabla^{p-1} v_\eps -  u_\eps \:V \right) \cdot  \nabla \xi \, dx = \int_\Omega f_\eps \: \xi \, dx, \quad \hbox{in }{\D}'([0,T)).$$
So, letting $\eps\to 0$ and using the convergence of $(\tilde u_\eps,u_\eps,v_\eps,f_\eps)$ to $(u,u,v,f)$, we deduce that $(u,v)$ is a weak solution of \eqref{cmef}, and $v$ satisfies \eqref{newestimate}.
\end{proof}
	
	\medskip 
	\begin{proofth}{Proof of Theorem \ref{existcmef}} The proof follows directly from Lemma \ref{lmildweak}
		
	\end{proofth} 
	
	\medskip 
	
	\begin{proofth}{Proof of Theorem \ref{Theorem1}} The existence of a weak solution is directly established by Theorem \ref{existcmef}. Uniqueness is ensured by   Theorem \ref{compcmef} and Corollary \ref{Cuniq}.
		
	\end{proofth} 
	
	\section{Asymptotic behaviour as $p\to\infty$}\label{Slimitp}
	
	This section contains the proof of Theorem \ref{Theorem2}, namely the study of the limit as $p \to \infty$ of the solution of \eqref{cmef}. 
	We start with appropriate \textit{a priori} estimates independent of $p$. We obviously have 
	\begin{equation}
		\label{linfinitybound}
		\left\| u_p\right\|_{L^\infty (Q)} \leq 1.
	\end{equation} 
Thanks to \eqref{newestimate} and \eqref{HypV0}, we have
	\begin{eqnarray*}
		\iint_Q \left|\nabla v_p \right|^p & \leq & \iint_Q \left( f - \nabla \cdot V \right) v_p\\
		& \leq & \frac{1}{p^\prime \epsilon^{p^\prime}} \iint_Q \left| f - \nabla \cdot V \right|^{p^\prime} + \frac{\epsilon^p}{p} \iint_Q \left| v_p \right|^p\\
		& \leq & \frac{1}{p^\prime \epsilon^{p^\prime}} \iint_Q \left| f - \nabla \cdot V \right|^{p^\prime} + \frac{C_p^p\epsilon^p}{p} \iint_Q \left| \nabla v_p \right|^p,
	\end{eqnarray*}
	using Young's and Poincar\'e's inequalities.
	We now take
	$$\epsilon = \left( \frac{p}{2}\right)^{1/p} \frac{1}{C_p}$$
	to get
	$$\iint_Q \left|\nabla v_p \right|^p  \leq (p-1) \left(\frac{2C_p}{p} \right)^{p^\prime} \iint_Q \left| f - \nabla \cdot V \right|^{p^\prime}.$$
	Now, for any $q \geq 1$ and $p \geq q$, we have, by H\"older's inequality,
	\begin{eqnarray*}\iint_Q \left|\nabla v_p \right|^q  & \leq & \left| Q \right|^{1-\frac{q}{p}} \left( \iint_Q \left|\nabla v_p \right|^p \right)^{\frac{q}{p}}\\
		& \leq & \left| Q \right|^{1-\frac{q}{p}}  (p-1)^{\frac{q}{p}} \left(\frac{2C_p}{p} \right)^{^{\frac{q}{p-1}}} \left( \iint_Q \left| f - \nabla \cdot V \right|^{p^\prime} \right)^{\frac{q}{p}}.
	\end{eqnarray*}
	Taking the limit as $p \to \infty$, we obtain
	\begin{equation}
		\label{w1qbound}
		\lim_{p\to \infty} \left\| \nabla v_p \right\|_q^q \leq \left| Q \right|,
	\end{equation}
	since $C_p \to C$ (see \cite[page 110]{Ziemer}) and
	$$\left\| f - \nabla \cdot V \right\|_{p^\prime}^{\frac{q}{p-1}} \longrightarrow \left\| f - \nabla \cdot V \right\|_{1}^{0} = 1.$$
	Using again Poincar\'e's inequality, we conclude that $(v_p)_p$ is bounded in $L^q \left(0,T;W^{1,q}(\Omega) \right)$. 
	
	\medskip
	
	\begin{proofth}{Proof of Theorem \ref{Theorem2}}
		From \eqref{linfinitybound} and \eqref{w1qbound}, we find a pair 
		$$ \left( u,v\right) \in L^\infty (Q) \times L^q \left(0,T;W^{1,q}(\Omega) \right)$$
		such that, for subsequences (that we relabel for convenience),
		$$\begin{array}{cccrl}
			u_p & \rightharpoonup & u & \mathrm{in} & L^\infty (Q)-\mbox{weak-}\ast;\\
			\\
			v_p & \rightharpoonup & v & \mathrm{in} & L^q \left(0,T;W^{1,q}(\Omega)\right)-\mathrm{weak}.
		\end{array}$$
		Moreover, we have $0\leq v ,$ 
		 $0 \leq u \leq 1$ 
		and
		$$\left\| \nabla v \right\|_q \leq \left| Q \right|^{1/q}, \qquad \forall q>1.$$
		So, taking $q \to \infty$, we get
		$$\left\| \nabla v \right\|_{\infty} \leq 1.$$
		
		Next, we show that $\partial_t u_p$ is uniformly weakly bounded. Indeed, for $\varphi \in \D (Q)$,
		\begin{eqnarray*}
			\iint_Q u_p \partial_t  \varphi & = &  - \iint_Q \left|\nabla v_p \right|^{p-2} \nabla v_p \cdot \nabla \varphi +\iint_Q  u_p  \: V \cdot \nabla \varphi + \iint_Q f \varphi\\
			& \leq & \| \nabla \varphi \|_{\infty} \left\{ \iint_Q \left|\nabla v_p \right|^{p-1} + \iint_Q  |V| + C \| f \|_{1} \right\}\\
			& \leq & C\: \| \nabla \varphi \|_{\infty}.
		\end{eqnarray*}
		We can then apply \cite[Proposition 1.4]{ACM_17}, to conclude that 
		$$u \in \sign^+ (v).$$

	Thanks to \eqref{newestimate}, one sees that 
	\begin{equation}
\begin{array}{c}
		- \delta \frac{d}{dt} \int_{\Omega} u \xi + 	  \iint_Q \left|\nabla v_p \right|^{p-2} \nabla v_p \cdot \nabla (v_p - \delta \xi)   \\  \leq   \iint_Q  u_p  \: V \cdot \nabla \left( v_p -\delta \xi \right) + \iint_Q f \left( v_p - \delta\xi \right),  \quad \hbox{ in }\D'([0,T)),
		\end{array} 
 	\end{equation}
for any $\xi \in W_D^{1,\infty} (\Omega)$ and $0<\delta <1$, where we use again that 
$$\int_\Omega  u_p  \: V \cdot \nabla  v_p  = \int_\Omega    V \cdot \nabla  v_p, \quad \hbox{ a.e } t\in [0,T).$$
Our aim now is to let $p\to \infty$  :    
	\begin{enumerate}
	 \item for the first term, we have
	\begin{equation}
  - \delta\:  \lim_{p\to\infty }    \frac{d}{dt} \int_{\Omega} u_p \xi  \quad \rightharpoonup  \quad  - \delta\:  \frac{d}{dt} \int_{\Omega} u \xi,  \quad \hbox{ in }\D'([0,T)) ; 
	\end{equation}

	\medskip
	
	\item concerning the second term, by monotonicity and \eqref{w1qbound}, we have
	$$
	\hspace*{-1cm} \liminf_{p\to\infty }\int_\Omega \left|\nabla v_p \right|^{p-2} \nabla v_p \cdot \nabla (v_p - \delta\: \xi) 
	$$
	$$
	\hspace*{1.5cm} \geq \lim_{p\to\infty} \delta^{p-1}\: \int_\Omega \left|\nabla \xi \right|^{p-2} \nabla \xi \cdot \nabla (v_p - \delta\: \xi)= 0;\quad \hbox{ in }\D'([0,T))  
	$$
	
	\medskip
	
	\item as for the first term on the right-hand side, we get
	
	\begin{eqnarray*}
		\lim_{p\to\infty } \int_\Omega  u_p  \: V \cdot \nabla \left( v_p - \delta\: \xi \right) & = & \lim_{p\to\infty } \int_\Omega  u_p  \: V \cdot \nabla v_p  \\
		& & -\delta\:   \lim_{p\to\infty }    \int_\Omega  u_p  \: V \cdot \nabla \xi \\
		& = & \lim_{p\to\infty } \int_\Omega  V \cdot \nabla v_p  \\
		& & -\delta\:  \lim_{p\to\infty }   \int_\Omega  u_p  \: V \cdot \nabla \xi \\
		& = &  \int_\Omega  u  \: V \cdot \nabla (v- \delta\:  \xi), \quad \hbox{ in }\D'([0,T)) 
	\end{eqnarray*}
	since $u_p\in \sign^+ (v_p)$ and $u\in \sign^+ (v)$, a.e. in $Q;$ 
	
	\medskip
	
	\item for the second term on the right-hand side, the passage to the limit is straightforward. 
	
\end{enumerate}
		
		\medskip
		
		So, for any $0<\delta <1,$ we get 
		$$	 	
	 - \delta\:  \frac{d}{dt} \int_{\Omega} u \xi   - \int_{\Omega}   u  \: V \cdot \nabla \left( v -\delta \xi \right)  \leq \int_{\Omega}   f \left( v - \delta\xi \right) \quad \hbox{ in }\D'([0,T)) .
		$$			
		Letting at last $\delta \to 1$, we obtain the result. 
		
	\end{proofth}
	
	\appendix
	
	\section{}
	
	We need two lemmas to prepare for the proof of Proposition \ref{prenormal}.
	
	\begin{aplemma}\label{leqp+}  	 
		If $(u,v)$ is a weak solution of \eqref{cmef}, then
		\begin{equation}\label{eqp+1}
			-\Delta_p v + \left( \nabla \cdot V-f \right) \sign_0(v) \leq 0 \quad \hbox{in } \D^\prime \left( (0,T)\times \overline \Omega \right).
		\end{equation}
	\end{aplemma}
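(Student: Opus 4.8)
The plan is to establish the differential inequality \eqref{eqp+1} by testing the weak formulation of \eqref{cmef} against a suitable approximation of $\sign_0(v)$ multiplied by a nonnegative test function, and then to pass to the limit in the approximation. The key observation is that since $u \in \signp(v)$, on the set $\{v>0\}$ we have $u=1$, while the constraint $0 \le u \le 1$ together with the structure of $\signp$ controls the behaviour where $v=0$. The quantity $\sign_0(v)$ is nonnegative precisely because $v \ge 0$, so effectively $\sign_0(v)$ coincides with the characteristic function of $\{v>0\}$.

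First I would fix $0 \le \varphi \in \D\left((0,T)\times \overline\Omega\right)$ and, for $\sigma>0$, use the test function $\xi = \heps_\sigma(v)\,\varphi$ (or equivalently a smooth monotone approximation of $\chi_{\{v>0\}}$) in the weak formulation from Definition \ref{weaksol}. Note that because $v=0$ on $\Gamma_D$ and the approximation vanishes near $v=0$, the test function remains admissible in $W^{1,p}_D(\Omega)$, and the boundary term on $\Gamma_D$ is handled by the fact that we test against $\overline\Omega$, allowing the flux term to carry the appropriate sign. The diffusion term produces $\iint_Q \left|\nabla v\right|^{p-2}\nabla v \cdot \nabla\left(\heps_\sigma(v)\right)\varphi$, which by the chain rule equals $\iint_Q \left|\nabla v\right|^{p}\heps_\sigma'(v)\,\varphi \ge 0$ since $\heps_\sigma' \ge 0$ and $\varphi \ge 0$; this is the sign that makes the $-\Delta_p v$ term contribute with the correct orientation in the distributional inequality.

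Next I would handle the transport and source terms. Using $u \in \signp(v)$, on $\{v>0\}$ one has $u=1$, so $u\,\heps_\sigma(v) \to \sign_0(v)$ and the drift term $-\iint_Q u\,V\cdot\nabla\left(\heps_\sigma(v)\varphi\right)$ together with the time-derivative contribution must be reorganized. The crucial point is that the time-derivative term $-\iint_Q u\,\xi\,\psi'$ should vanish or be absorbed, because $\partial_t u$ acting against $\heps_\sigma(v)$ picks up only the congested region where $u=1$ is constant in the relevant sense; more precisely, since we integrate the flux against $\sign_0(v) = \chi_{\{v>0\}}$, on that set $u\equiv 1$ and the transport term becomes $\nabla\cdot V$ tested against $\sign_0(v)\varphi$. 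Collecting terms and letting $\sigma \to 0$, the dominated convergence theorem (using the uniform bound $\left|\heps_\sigma\right|\le 1$ and the integrability of $\nabla^{p-1}v$, $f$, and $\nabla\cdot V$) yields
$$
\iint_Q \left(-\Delta_p v + (\nabla\cdot V - f)\sign_0(v)\right)\varphi \le 0.
$$

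The main obstacle I anticipate is the rigorous justification of the time-derivative term vanishing and the careful treatment of the set $\{v=0\}$ where $u$ may take any value in $[0,1]$. Since $\sign_0(v)$ is defined pointwise and $v$ need not have controlled level sets, one must argue that the contribution from $\{v=0\}$ to the limit is negligible; this is where the precise definition of $\heps_\sigma$ and the monotone structure of the graph $\signp$ are essential. A secondary technical difficulty is that $u$ is only $L^\infty$ and $\partial_t u$ is merely a distribution, so the manipulation $\int \partial_t u\,\heps_\sigma(v)$ must be interpreted through the weak formulation rather than by direct integration by parts; this forces the argument to proceed entirely within the distributional identity of Definition \ref{weaksol}, passing to the limit only after all terms are expressed as genuine integrals.
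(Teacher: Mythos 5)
Your proposal follows the same basic strategy as the paper: test with an approximation $\heps(v)$ of $\sign_0(v)$ localized by a nonnegative cutoff, exploit the identity $u\,\heps(v)=\heps(v)$ (a consequence of $u\in\signp(v)$, $0\le u\le 1$ and $\heps(0)=0$), discard the nonnegative diffusion contribution $\iint_Q \left|\nabla v\right|^p\,\heps'(v)\,\varphi\ge 0$, integrate the drift by parts using \eqref{HypV0}, and finally let the approximation parameter tend to zero. However, there is a genuine gap at exactly the point you yourself flag as ``the main obstacle'': the time-derivative term. Your candidate test function $\heps(v)\varphi$ is time-dependent through $v$, whereas Definition \ref{weaksol} admits only separated test functions $\xi(x)\psi(t)$ with $\xi$ independent of $t$; and even in an extended formulation, the quantity $\iint_Q u\,\partial_t\bigl(\heps(v)\bigr)\varphi$ is meaningless, because $v\in L^p\bigl(0,T;W^{1,p}_D(\Omega)\bigr)$ has no time regularity whatsoever and $\partial_t u$ is only a distribution. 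Asserting that this term ``should vanish or be absorbed'' because $u\equiv 1$ on $\{v>0\}$ is not an argument: the set $\{v>0\}$ moves in time, $u$ need not be constant in time on it, and no cancellation is available at this level of regularity.

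The paper's proof supplies precisely the missing mechanism. It replaces $\heps(v(t))$ by its forward Steklov average, taking $\Phi^h(t,x)=\xi(x)\,\psi(t)\,\frac{1}{h}\int_t^{t+h}\heps(v(s,x))\,ds$, which is Lipschitz in time (its time derivative is a difference quotient of $\heps(v)$, so no $\partial_t v$ is ever needed) and hence admissible. The sign of the time term then comes not from any constancy of $u$ but from a monotonicity inequality: at equal times one has the identity $u(t)\,\heps(v(t))=\heps(v(t))$, while at shifted times one only has the inequality $u(t)\,\heps(v(t+h))\le\heps(v(t+h))$; combining the two and shifting the difference quotient onto $\psi$ yields $\limsup_{h\to 0}\iint_Q u\,\partial_t\Phi^h\le 0$, which is what allows the time term to be discarded with the correct sign before letting $\eps\to 0$. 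Without this averaging-plus-monotonicity step, or an equivalent substitute, your outline cannot be completed into a proof.
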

	
	\begin{proof}
		
		We extend $v$ to $\R \times\Omega$ by $0$, for any $t\not\in (0,T)$, and, for any $h>0$, we consider, with $\xi \in \D(\overline \Omega)$ and $\psi \in \D \left( (0,T) \right)$,
		$$ 
		\Phi^h (t,x)=\xi(x)\, \psi(t) \, \frac{1}{h} \int_t^{t+h} \heps (v(s,x))\, ds, \quad \hbox{for a.e. }(t,x)\in Q,
		$$
		where $\psi$ is extended, in turn, to $\R$ by $0$, and $\heps$ is given in $\R$ by
		$$
		\heps(r)=\left\{ 
		\begin{array}{ll}   
			1 \quad & \hbox{ if } r>\eps \\   
			r/\eps &  \hbox{ if } \left|r\right|\leq \eps\\ 
			-1 & \hbox{ if } r< - \eps,
		\end{array}\right.
		$$
		for $\eps >0.$  It is clear that $\Phi^h \in W^{1,p} \left( 0,T; W^{1,p}_D(\Omega) \right) \cap L^\infty(Q)$ is an admissible test function for the weak formulation, so that 
		\begin{equation}\label{evolh0}
			- \iint_Q u \, \partial_t \Phi^h + \iint_Q \left( \nabla^{p-1} v - V \,  u  \right) \cdot \nabla \Phi^h  = \iint_Q f\, \Phi^h.
		\end{equation}
		Observe that 
		\begin{eqnarray}
			\iint_Q  u  \, \partial_t \Phi^h & = & \iint_Q  u \, \xi \, \partial_t \psi \, \frac{1}{h} \int_t^{t+h} \heps(v((s)) \, ds \nonumber\\
			& & + \iint_Q u(t) \, \frac{\heps (v (t+h)) - \heps (v (t))}{h} \, \psi(t)\,  \xi. \label{rel0} 
		\end{eqnarray}
		Moreover, using the fact that, for a.e. $t\in (0,T)$, $0\leq  u(t) \leq 1$,  ${\heps}\geq 0$ and $ \heps(0)=0$,  we have
		$$u(t,x) \, \heps(v(t,x)) = \heps(v(t,x))$$ 
		and 	
		$$  u(t,x)\: \heps(v(t+h,x)) \leq \heps(v(t+h,x)), \quad \mbox{a.e. } (t,x) \in Q.$$  
		So, for $h>0$ small enough, we have 
		\begin{eqnarray*}
			\iint_Q u(t) \, \frac{\heps(v (t+h)) - \heps(v (t))}{h} \, \psi(t)\,  \xi \\  	
			\leq \iint_Q \frac{\heps(v (t+h)) - \heps(v (t))}{h} \, \psi(t) \, \xi  \\	
			\leq \iint_Q \frac{\psi (t-h) - \psi (t)}{h} \, \heps(v (t)) \, \xi.
		\end{eqnarray*}
		This implies that
		$$
		\limsup_{h\to 0} \iint_Q u(t) \, \frac{\heps(v (t+h)) - \heps(v (t))}{h} \, \psi(t) \, \xi \leq - \iint_Q \partial_t \psi \,     \heps(v(t)) \, \xi,
		$$
		so that, by letting $h\to 0$ in \eqref{rel0}, we get
		$$	
		\lim_{h\to 0} \iint_Q u  \, \partial_t \Phi^h \leq 0.
		$$ 
		Then, by letting $h\to 0$ in \eqref{evolh0}, we obtain 
		\begin{equation}\label{ajout0}
			\iint_Q \left( \nabla^{p-1} v - V\:  u  \right) \cdot \nabla \left( \heps(v (t)) \, \xi \right) \psi  \leq \iint_Q f \, \heps(v(t)) \, \xi \, \psi. 	
		\end{equation}
		On the other hand, using again the fact that $u \, \heps(v)= \heps(v)$, a.e. in $Q$, we have
		\begin{eqnarray*}
			\iint_Q \left( \nabla^{p-1} v - V\:  u  \right) \cdot \nabla \left( \heps(v (t)) \, \xi \right) \psi \hspace*{4cm}\\ 
			= \iint_Q \heps(v) \, \nabla^{p-1} v \cdot \nabla \xi \, \psi + \iint_Q  \left|  \nabla v \right|^p ( {\heps})'(v)\, \xi \, \psi \\
			- \iint_Q  V \cdot \nabla \left( \xi \heps(v) \right) \psi \\  
			\geq \iint_Q \heps(v) \, \nabla^{p-1} v \cdot \nabla \xi \, \psi + \iint_Q \nabla \cdot V  \left( \xi \heps(v) \right) \psi, 
		\end{eqnarray*}
		using \eqref{HypV0} and the fact that $\left| \nabla v\right|^p ({\heps})'(v) \geq 0$.
		Thanks to \eqref{ajout0}, this implies  
		$$
		\iint_Q \nabla^{p-1} v \cdot \nabla \xi \, {\heps}(v)\, \psi + \iint_Q \nabla \cdot  V \,  \heps(v) \, \xi \, \psi \leq \iint_Q f \, \heps(v) \, \xi \, \psi. 
		$$ 
		Letting $\eps\to 0$, we obtain \eqref{eqp+1}.      
		
	\end{proof}
	
	We now state and prove the second lemma.
	
	\begin{aplemma} \label{lrenormal}
		Let $u\in L^1_{loc}(Q)$,  $F\in L^1_{loc}(Q)^N$ and $J_1 \in L^1_{loc}(Q)$ be such that
		\begin{equation}\label{form1}
			\partial_t u + V\cdot \nabla u -\nabla \cdot F = J_{1} \quad \hbox{in } \mathcal D^\prime(Q),
		\end{equation}
		where $V\cdot \nabla u$ is taken in the sense $V \cdot \nabla u = \nabla \cdot (u \, V) - u \, \nabla \cdot V$ in $\D^\prime(Q)$. If
		\begin{equation}\label{form2}
			-\nabla \cdot F \leq J_{2} \quad \hbox{in } \D^\prime(Q),
		\end{equation}
		for some $J_2 \in L^1_{loc}(Q)$, then
		\begin{equation} \label{techineq}
			\partial_t \beta (u) + V\cdot \nabla \beta(u) - \nabla \cdot F \leq {J_{1}} \beta^\prime(u) +  {J_{2}}(1-\beta^\prime(u))         \quad \hbox{in }  \D^\prime(Q),
		\end{equation}
		for any $\beta \in \mathcal C^1(\R)$ such that $\beta^\prime \leq 1$.
	\end{aplemma}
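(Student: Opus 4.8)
The plan is to follow the DiPerna--Lions renormalization strategy: regularize $u$ by spatial convolution, use a commutator estimate to pass the nonlinearity $\beta$ through a smooth approximation, then exploit the sign condition $\beta'\le 1$ together with the one-sided bound \eqref{form2}, and finally let the regularization parameter vanish.

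First I would mollify only in the space variable. Let $(\rho_\delta)_{\delta>0}$ be standard mollifiers on $\R^N$ and write $u_\delta:=u*_x\rho_\delta$, $F_\delta:=F*_x\rho_\delta$, $J_{i,\delta}:=J_i*_x\rho_\delta$. Convolving \eqref{form1} with $\rho_\delta$ and expanding the transport term through $V\cdot\nabla u=\nabla\cdot(uV)-u\,\nabla\cdot V$, I obtain
$$\partial_t u_\delta + V\cdot\nabla u_\delta - \nabla\cdot F_\delta = J_{1,\delta} - r_\delta,$$
where $r_\delta:=(V\cdot\nabla u)*_x\rho_\delta - V\cdot\nabla u_\delta$ is the DiPerna--Lions commutator. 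The key point, and the main technical obstacle, is that $r_\delta\to 0$ in $L^1_{loc}(Q)$; this is exactly the commutator lemma, and it is here that the regularity \eqref{HypV} of the drift, namely $V\in [W^{1,p'}(\Omega)]^N$ with $\nabla\cdot V\in L^\infty(\Omega)$, is used, together with the local integrability of $u$.

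Since $u_\delta$ is smooth in $x$ and satisfies $\partial_t u_\delta\in L^1_{loc}(Q)$, the chain rule is legitimate and gives, for $\beta\in\mathcal C^1(\R)$,
$$\partial_t\beta(u_\delta)+V\cdot\nabla\beta(u_\delta)=\beta'(u_\delta)\big(\nabla\cdot F_\delta + J_{1,\delta}-r_\delta\big),$$
hence
$$\partial_t\beta(u_\delta)+V\cdot\nabla\beta(u_\delta)-\nabla\cdot F_\delta=\big(\beta'(u_\delta)-1\big)\nabla\cdot F_\delta+\beta'(u_\delta)\big(J_{1,\delta}-r_\delta\big).$$
To control the first term I would convolve the distributional inequality \eqref{form2} with the nonnegative kernel $\rho_\delta$: since $J_2+\nabla\cdot F\ge 0$ is a nonnegative distribution, its mollification is a nonnegative function, so $-\nabla\cdot F_\delta\le J_{2,\delta}$ pointwise a.e. in $Q$. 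Because $\beta'\le 1$, the factor $1-\beta'(u_\delta)$ is nonnegative, and multiplying the previous pointwise inequality by it yields
$$\big(\beta'(u_\delta)-1\big)\nabla\cdot F_\delta=-\big(1-\beta'(u_\delta)\big)\nabla\cdot F_\delta\le\big(1-\beta'(u_\delta)\big)J_{2,\delta}.$$
Combining, I arrive at the regularized renormalized inequality
$$\partial_t\beta(u_\delta)+V\cdot\nabla\beta(u_\delta)-\nabla\cdot F_\delta\le\beta'(u_\delta)\,J_{1,\delta}+\big(1-\beta'(u_\delta)\big)J_{2,\delta}-\beta'(u_\delta)\,r_\delta.$$

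Finally I would let $\delta\to 0$. Along a subsequence $u_\delta\to u$ a.e., so by continuity $\beta(u_\delta)\to\beta(u)$ and $\beta'(u_\delta)\to\beta'(u)$; together with $F_\delta\to F$, $J_{i,\delta}\to J_i$ in $L^1_{loc}(Q)$ and $r_\delta\to 0$ in $L^1_{loc}(Q)$, every term passes to the limit (the products $\beta'(u_\delta)J_{1,\delta}$, $(1-\beta'(u_\delta))J_{2,\delta}$ and $\beta'(u_\delta)r_\delta$ by dominated convergence, using that $\beta'$ is bounded on the range of the approximations). Testing against an arbitrary $0\le\varphi\in\mathcal D(Q)$ and taking the limit in the distributional inequality produces \eqref{techineq}. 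The only genuine difficulty lies in the commutator estimate $r_\delta\to 0$; the chain rule for the regularization, the positivity of convolution applied to \eqref{form2}, and the final passage to the limit are then routine.
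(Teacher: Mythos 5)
Your proof is correct and takes essentially the same route as the paper: mollify the equation and the one-sided inequality \eqref{form2}, exploit $\beta'\le 1$ so that the inequality can be weighted by the nonnegative factor $1-\beta'(u_\delta)$ (your algebraic rearrangement of the term $(\beta'(u_\delta)-1)\nabla\cdot F_\delta$ is exactly the paper's step of multiplying the two regularized relations by $\beta'(u_\eps)$ and $1-\beta'(u_\eps)$ and adding), and conclude via the DiPerna--Lions/Ambrosio commutator convergence. The only deviation is that you mollify in space alone and justify the temporal chain rule through $\partial_t u_\delta\in L^1_{loc}$, whereas the paper mollifies in space-time on $Q_\eps$ so the chain rule is immediate; this is a minor technical variant, not a different argument.
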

	
	\begin{proof}    
		
		We set 
		$$Q_\eps := \left\{ (t,x)\in Q\: : \: d((t,x),\partial Q) > \eps \right\}.$$  
		Moreover, for any $z\in L^1_{loc}(Q)$, we denote by $z_\eps$ the usual regularization of $z$ by convolution given by 
		$$z_\eps := z \star \rho_\eps, \quad \hbox{in }Q_\eps, $$ 
		where $\rho_\eps$ is the standard mollifying sequence in $\R \times \R^N$.  
		We can show that \eqref{form1} and \eqref{form2} imply, respectively,
		\begin{equation}\label{formeps1}
			\partial_t u_\eps + V \cdot \nabla u_\eps -\nabla \cdot F_\eps = {J_{1}}_\eps + \C_\eps \quad \hbox{in }  Q_\eps
		\end{equation}
		and 
		\begin{equation}\label{formeps2}
			-\nabla \cdot F_\eps \leq {J_{2}}_\eps \quad \hbox{in }  Q_\eps, 
		\end{equation}
		where $\C_\eps$ is the usual commutator given by
		$$ \C_\eps:= V\cdot \nabla u_\eps - (V\cdot \nabla u)_\eps.$$  
		Here $(V\cdot \nabla u)_\eps$ needs to be understood in the sense 
		$$
		(V\cdot \nabla u)_\eps = (u\, V)\star  \nabla \rho_\eps - (u\, \nabla \cdot V)\star \rho_\eps, \quad \hbox{in }Q_\eps.
		$$
		Multiplying \eqref{formeps1} by $\beta^\prime(u_\eps)$ and \eqref{formeps2} by $1-\beta^\prime(u_\eps)$, and adding the resulting equations, we obtain
		$$\beta^\prime (u_\eps) \, \partial_t u_\eps + \beta^\prime(u_\eps) \, V\cdot \nabla u_\eps -\nabla \cdot F_\eps \leq \C_\eps \,  \beta^\prime(u_\eps) + {J_{1}}_\eps \beta^\prime(u_\eps) + {J_{2}}_\eps(1-\beta^\prime(u_\eps) )$$
		and
		\begin{equation}\label{formeps}
			\partial_t \beta(u_\eps) +	V\cdot \nabla \beta(u_\eps) -\nabla \cdot F_\eps \leq \C_\eps \, \beta^\prime(u_\eps)  + {J_{1}}_\eps\beta^\prime(u_\eps) + {J_{2}}_\eps(1-\beta^\prime(u_\eps)),
		\end{equation}
		in $Q_\eps$. Since $V \in W^{1,1}_{loc} (\Omega)$ and $\nabla \cdot V \in L^\infty(\Omega)$, it is well-known that taking a subsequence if necessary, the commutator converges to $0$ in $L^1_{loc}(Q)$, as $\eps \to 0$ (see, for instance, \cite{Ambrosio}). Thus, letting $\eps\to 0$ in \eqref{formeps}, we obtain \eqref{techineq}.
		
	\end{proof}
	
	We are now ready for the proof of Proposition \ref{prenormal}.
	
	\medskip
	
	\begin{proofth}{Proof of Proposition \ref{prenormal}}
		Due to Lemma \ref{leqp+}, and using the fact that
		$$ 	
		\nabla \cdot V \, \sign_0(v)= u \, \nabla \cdot V \, \sign_0(v),
		$$ 
		we see that \eqref{form1} and \eqref{form2} are fulfilled with
		$$ 
		F:=  \nabla^{p-1} v,  \qquad  J_{1}:= f- u \, \nabla \cdot V
		$$
		and  	
		$$ 
		J_{2}:= \left( f-u \, \nabla \cdot V \right) \sign_0(v). 
		$$
		Applying Lemma \ref{lrenormal}, for any $\beta \in \mathcal C^1(\R)$ such that $\beta^\prime \leq 1$, we deduce that
		$$
		\partial_t \beta (u) - \Delta_p v + 	V\cdot \nabla \beta(u) + \left( u\, \nabla \cdot V-f \right)  \beta^\prime(u) \hspace*{2cm}
		$$
		$$
		+ \left( f- u \nabla \cdot V  \right) \sign_0(v) ( \beta'(u)-1)  \leq 0 \quad \hbox{in }  \D^\prime (Q).
		$$
		Using again the fact that $\nabla \cdot V \, \sign_0(v)	=u\, \nabla \cdot V \, \sign_0(v)$ this implies that
		$$
		\partial_t \beta (u) - \Delta_p v  + 	V \cdot \nabla \beta(u) +  u\, \nabla \cdot V \, \beta^\prime(u) + u\nabla \cdot V \, \sign_0(v) (1-\beta^\prime(u))
		$$
		$$     
		\leq f  \left( \sign_0(v) (1-\beta'(u) ) + \beta^\prime(u) \right) \quad \hbox{in } \D^\prime(Q),$$
		and then 
		$$
		\partial_t \beta (u) - \Delta_p v + V \cdot \nabla \beta(u) +  u \, \nabla \cdot V \left( \beta^\prime(u) \chi_{[v=0]} +  \sign_0(v)\right) 
		$$    
		$$
		\leq f  \left( \beta^\prime(u) \chi_{[v=0]} + \sign_0(v)  \right)  \quad \hbox{in }  \D^\prime(Q).
		$$		
	\end{proofth}
	
\medskip

{\small \noindent{\bf Acknowledgments.} 
This publication is based upon work supported by King Abdullah University of Science and Technology (KAUST) under Award No. ORFS-CRG12-2024-6430. JMU is partially supported by UID/00324 - Centre for Mathematics of the University of Coimbra. This work was initiated during a visit of NI to KAUST, and he would like to express his gratitude to the institution for the hospitality.}

\end{document}